\newfont{\cyrr}{wncyr10}
\newcommand{\R}{{\mathbb R}}
\newcommand{\C}{{\mathbb C}}
\newcommand{\Q}{{\mathbb Q}}
\newcommand{\N}{{\mathbb N}}
\newcommand{\A}{{\mathscr A}}
\renewcommand{\S}{{\mathscr S}}
\renewcommand{\L}{{\mathcal L}}
\newtheorem{thm}{Theorem}[section]
\newtheorem{lem}[thm]{Lemma}
\newtheorem{prop}[thm]{Proposition}
\newtheorem{cor}[thm]{Corollary}
\newcommand{\thmref}[1]{theorem~\ref{#1}}
\begin{document}

\title[Special values]{Transcendental sums related to the 
zeros of zeta functions}

\thanks{Research of the second author was supported by 
an NSERC Discovery grant and a Simons Fellowship.}

\author{Sanoli Gun, M. Ram Murty and Purusottam Rath}

\address[Sanoli Gun]  
{Institute of Mathematical Sciences, 
Homi Bhabha National Institute, 
C.I.T Campus, Taramani, 
Chennai  600 113, India.}
\email{sanoli@imsc.res.in}

\address[M.Ram Murty]
{Department of Mathematics,
Queen's University, Jeffrey Hall, 
99 University Avenue, 
Kingston, ON K7L3N6, Canada}
\email{murty@mast.queensu.ca}

\address[Purusottam Rath] 
{Chennai Mathematical Institute,
Plot No H1, SIPCOT IT Park, 
Padur PO, Siruseri 603103, 
Tamilnadu, India} 
\email{rath@cmi.ac.in}

\subjclass[2010]{11J81, 11J86, 11M06}

\keywords{Zeros of zeta function, linear forms in logarithms, Selberg class}

\begin{abstract}
While the distribution of the non-trivial zeros of the Riemann zeta function
constitutes a central theme in Mathematics, nothing is known about 
the algebraic nature of these non-trivial zeros. 
In this article, we study the transcendental nature of 
sums of the form
$$
\sum_{\rho }  R(\rho) x^{\rho},
$$
where the sum is over the non-trivial zeros $\rho$ 
of $\zeta(s)$, $R(x) \in \overline{\Q}(x) $ is 
a rational function
over algebraic numbers and $x >0$ is a 
real algebraic number.
In particular,  we show that the function 
$$
f(x) = \sum_{\rho }  \frac{x^{\rho}}{\rho}
$$ 
has infinitely many zeros in $(1, \infty)$,
at most one of which is algebraic.
The transcendence
tools required for studying $f(x)$ in the range $x<1$ 
seem to be different
from those in the range $x>1$. For $x < 1$, we 
have the following non-vanishing theorem:
If for an integer $d \ge 1$,  $f(\pi \sqrt{d} x)$ 
has a rational zero in $(0,~1/\pi \sqrt{d})$,
then $$
L'(1,\chi_{-d}) \neq 0,
$$
where $\chi_{-d}$ 
is the quadratic character 
associated to the
imaginary quadratic field $K:= \Q(\sqrt{-d})$.
Finally, we consider analogous questions for
elements in the Selberg class. Our proofs rest on results 
from analytic as well as transcendental number theory.
\end{abstract}

\maketitle        
        
\section{Introduction}
\bigskip

For $s\in \C$ with $\Re(s)>1$, the Riemann zeta-function is defined by
$$
\zeta(s)= \sum_{n=1}^{\infty} \frac{1}{n^s}.
$$
It is well known that $\zeta(s)$ has an analytic continuation 
to the entire complex plane except at $s=1$, where it
has a simple pole with residue $1$. The functional equation for
$\zeta(s)$ is determined by the equation 
$$
\xi(s) = \xi(1-s),
$$ 
where $\xi(s)$ is an entire function defined as 
$$
\xi(s) := \frac{s(s-1)}{2}~\pi^{-\frac{s}{2}} ~\Gamma(\frac{s}{2})~\zeta(s).
$$ 
The values taken by the  Riemann zeta function  at 
positive integers and as well as its location of zeros  have
been studied extensively since the time
of Euler and Riemann.  However  the nature of these special values 
continues to elude us though there has been some success following
the works of Apery, Beukers, Rivoal, Zudilin among others.

On the other hand, the  nature of the non-trivial zeros of the Riemann zeta function
is as mysterious as the special values. Let 
$\mathbb{X}$ denote the set of its non-trivial zeros. 
Nothing is known about this set vis-a-vis transcendence. For instance,
consider the field $F = \Q(\mathbb{X})$. Then one can ask the 
following question:

\smallskip
{\it Is the transcendence degree of $F$ over $\Q$ at least one?}

\smallskip
\noindent

Also, consider the $\Q$-vector space $V$ (inside $\C$)  generated
 by the imaginary
parts of elements in $\mathbb{X}$.
Again, we can ask the seemingly easier question:
\smallskip

{\it Is  the dimension of $V$ over $\Q$  at least two?}
\smallskip

In fact, there is a folklore conjecture that the 
imaginary parts of the non-trivial zeros in the upper 
half plane are linearly 
independent over $\Q$ (see for example
Theorem A of \cite{ingham2}, see also \cite{RS}).

\smallskip
\noindent

One believes that the answers to both these questions should 
be affirmative, but it is not clear if the answers lie within the 
reach of the existing transcendence tools  or we need to 
discover new tools. One of the obstacles
to answer such basic questions is that the Riemann zeta function
does not satisfy any differential equation with algebraic parameters. 
More precisely, a classical result of Voronin \cite{SV} 
asserts that $\zeta(s)$ does not satisfy any equation of the form
$$
\sum_{j=0}^{n} s^j F_j\left ( \zeta(s), \cdots, \zeta^{(n-1)}(s) \right )
= 0
$$
for all $s$ lying on a line $\Re(s) = \sigma$ with
$\sigma \in (1/2, 1)$. Here
$F_j$'s for $j= 0, \cdots, n$ are continuous functions on $\C^n$, 
not all identically zero.
This functional independence of Riemann zeta function renders 
effete the applicability of the 
known general transcendental tools to the question of  the nature 
of non-trivial zeta zeros.

The goal of this note is to study the nature of certain general sums 
related to the zeros of the $\zeta$-function. More generally, 
we also consider
sums related to the zeros of  functions in the Selberg class.

Let us now introduce the type of sums we are interested in. Throughout 
the paper $\overline{\Q}$ will denote
 the field of algebraic numbers in $\C$.
Let $A, B \in \overline{\Q}[t]$ be polynomials.  
We study the transcendental nature of sums of the form
\begin{equation}\label{sum}
\sum_{\rho } \frac{A(\rho)}{B(\rho)} x^{\rho},
\end{equation}
where the sum is over the non-trivial zeros $\rho$ 
of $\zeta(s)$. Here $x> 0$ is a real number.
In this paper, we study the situation where $B(t)$ has only
simple zeros.

As shall be evident, we need to consider
the cases $x > 1$, $x=1$ and $0<x<1$ separately.
It appears that the transcendence input in the study
of the case $x>1$ is different from that of $x<1$.
The case $x=1$ is perhaps more mysterious.
For instance, if the rational function $R(x)=A(x)/B(x)$ satisfies
the functional equation $R(x)=-R(1-x)$, then 
$$\sum_{\rho} \frac{A(\rho)}{B(\rho)} = \frac{1}{2}\sum_\rho
\left( \frac{A(\rho)}{B(\rho)} + \frac{A(1-\rho)}{B(1-\rho)}\right) = 0, $$
since the functional equation for $\zeta(s)$ implies that 
 $\rho$ is a zero  if and only if $1-\rho$ is a zero.

The study of sums involving zeros of the Riemann zeta function
can have deep arithmetic significance. For instance,  
in 1997, Xian-Jin Li \cite{li} obtained a simple criterion (now
known as Li's criterion) linking positivity
of certain sums to the Riemann hypothesis. More precisely, let
$$
\lambda_n := \sum_{\rho} \left( 1 - \left(1 - \frac{1}{\rho}\right)^n \right).
$$
Then the Riemann hypothesis is true if and only if $\lambda_n \geq 0$ for
every natural number $n$ (see also work of Brown \cite{FB}).  
This result has led to a flurry of activity
and a plethora of interesting results have emerged from this.
For example, Bombieri and Lagarias \cite{bomb} derived the following
elegant arithmetic identity.  Define the \textit{Stieltjes constants}
$\gamma_n$ by
$$\zeta(s) = \frac{1}{s-1} + \sum_{n=0}^\infty \frac{(-1)^n}{n!}\gamma_n (s-1)^n.$$

It is not difficult to show that the $\gamma_n$'s are given by the limits
$$
\gamma_n = \lim_{m\to \infty} \left(\sum_{k=1}^m \frac{\log^n k}{k}
- \frac{\log^{n+1} m}{n+1}\right), $$
and these can be viewed as generalizations of the more familiar Euler constant
$\gamma_0= \gamma$.  This allows us to define the related constants $\eta_n$ via
$$-\frac{\zeta'(s)}{\zeta(s)} = \frac{1}{s-1} + \sum_{n=0}^\infty
\eta_n (s-1)^n.  $$
By long division, it is now clear that the $\eta_n$'s can be expressed as
polynomials in the $\gamma_n$'s with rational coefficients.
For instance, 
$$ \eta_0 = -\gamma_0, \quad \eta_1 = -\gamma_1 + \frac{1}{2}\gamma_0^2,
\quad \eta_3 = -\gamma_2 + \gamma_0\gamma_1 - \frac{1}{3}\gamma_0^3. $$
Then, it is shown in \cite{bomb} that
$$\lambda_n = - \sum_{j=1}^n 
\left( {n \atop j} \right) \eta_j + 1 - (\log 4\pi + \gamma)\frac{n}{2}
-\sum_{j=2}^n (-1)^{j-1} \left( {n \atop j} \right) (1-2^{-j})\zeta(j). $$
For $n=1$, this reduces to 
\begin{equation}\label{dagger}
\sum_{\rho} \frac{1}{\rho} = 1 + \frac{\gamma}{2} - \frac{\log 4\pi}{2}
= .0230957... 
\end{equation}
In this context, one has the following curious equivalence: 
$$
\text{ Riemann hypothesis }  
\iff
\sum_{\rho} \frac{1}{|\rho|^2} = 2 + \gamma - \log 4\pi 
$$
This is easy to show.  Indeed, 
\begin{equation}\label{starstar}
2\sum_{\rho} \frac{1}{\rho} = \sum_{\rho} \left( {1 \over \rho} + {1 \over
\overline{\rho}}\right) = \sum_{\rho} {2\Re(\rho) \over |\rho|^2} 
\end{equation}
from which we immediately see the result if the Riemann hypothesis
is true.  For the converse, suppose $\rho= x+iy$, with $x,y \in \R$
is a zero such that $x>1/2$.  Then, $(1-x)^2 + y^2 < x^2 + y^2$
so that 
$$(x - 1/2) \left( {1 \over (1-x)^2 + y^2} - {1 \over x^2 + y^2} \right)
>0. $$
In other words, for $x > 1/2$, 
\begin{equation} \label{star}
{1\over 2} \left( {1 \over (1-x)^2 + y^2} + {1 \over x^2 + y^2}\right)
> {x \over x^2 + y^2} + {1-x\over (1-x)^2 + y^2}.
\end{equation}
Writing our sum as
$$\sum_{\rho} {1 \over |\rho|^2} = \sum_{\rho, \Re(\rho)=1/2} {1 \over
|\rho|^2} + \sum_{\rho, \Re(\rho)\neq 1/2} {1 \over
|\rho|^2}$$
and pairing the zero $\rho$ with $1-\rho$ in the second sum, we deduce 
from (\ref{star}) that
$$\sum_{\Re(\rho)\neq 1/2} {1 \over |\rho|^2} > 2\sum_{\Re(\rho)\neq 1/2}
{\Re(\rho)\over |\rho|^2} . $$
Since
$$\sum_{\rho, \Re(\rho)=1/2} {1 \over
|\rho|^2} = 2\sum_{\Re(\rho)\neq 1/2}
{\Re(\rho)\over |\rho|^2} $$
we see that if the Riemann hypothesis is false, then using (\ref{starstar})
and (\ref{dagger}), 
$$\sum_{\rho} {1 \over |\rho|^2} > \sum_{\rho} {2\Re(\rho) \over |\rho|^2}
= 2 + \gamma - \log 4\pi $$
contrary to our hypothesis.  This idea can be easily 
generalized to the Selberg class
(see \cite{droll} for details).

Related to this, we study the possible transcendental nature
of sums of the form 
$$
\sum_{\nu >0} \frac{\cos(\nu \log x)}{\frac{1}{4} + \nu^2}
$$
for any algebraic $x>1$, subject to Riemann hypothesis.
Here, the sum is over the positive imaginary parts of the 
non-trivial zeros of the zeta function.  Similar investigations 
are also carried out without the assumption of the Riemann hypothesis.

The expression for $\lambda_n$ has been studied by several authors
from various angles.  Coffey \cite{coffey} writes
$$\lambda_n = 1 - \frac{n}{2}(\gamma + \log 4\pi) +S_1(n) + S_2(n), $$
where
$$
S_1(n) = \sum_{j=2}^n \left( {n \atop j}\right) (-1)^j 
\left( 1 - \frac{1}{2^{j}}\right)\zeta(j)$$
and $$S_2(n) = \sum_{j=1}^n \left( {n \atop j}\right) \eta_{j-1}. $$
Coffey showed that for $n \geq 2$,
$$ \frac{1}{2}(n(\log n + \gamma -1) +1) \leq S_1(n) \leq \frac{1}{2}
(n(\log n + \gamma+ 1) -1). $$
In particular, $S_1(n)$ is non-negative for every $n \geq 2$.  
This theorem reduces the study of $\lambda_n$ to the study of
$S_2(n)$ and sums involving the Stieltjes constants.

Bombieri and Lagarias \cite{bomb} show that the condition of positivity
can be considerably weakened to deduce the Riemann hypothesis.  In fact,
they show that if for any $\epsilon >0$, there is a constant
$c(\epsilon) >0$ such that
$$\lambda_n \geq -c(\epsilon) e^{\epsilon n} $$
for every $n \geq 1$, then the Riemann hypothesis follows.
Estimates for the Stieltjes constants have been studied
by several authors (see for example, \cite{coffey2}),
but these estimates give super-exponential estimates for
the sums in question.

Though the prototypical zeta function is the Riemann zeta function,
it is useful and interesting to consider the more general setting
of the Selberg class $\S$ which we carry out in this paper.  

Before we proceed further, let us fix some notations.
Throughout the paper, we denote
by  $\rho_F$ or sometimes by $\rho$ (if the context is clear)
the non-trivial zeros of an element $F(s)$ in the Selberg class.
In this context, we examine sums of the form (\ref{sum}) when
$\rho$ runs through zeros of a fixed element of the Selberg class.
More details of this theory can be found in section 6 below.

\section{Some Transcendental Prerequisites}

First we recall the following theorem
due to Alan Baker which will play a key role in our investigation.

\begin{thm} \label{baker} {\rm (Baker)}  If $\alpha_1, ..., \alpha_m$
are non-zero algebraic numbers such that 
$\log \alpha_1,  ..., \log \alpha_m$ are  linearly independent over $\Q$,
then $$1, \log \alpha_1, ..., \log \alpha_m$$ are linearly independent
over $\overline{\Q}$.  
\end{thm}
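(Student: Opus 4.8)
The plan is to follow the Gelfond--Baker method of auxiliary functions, arguing by contradiction. Suppose the conclusion fails, so that there is a relation $\beta_0 + \beta_1 \log\alpha_1 + \cdots + \beta_m \log\alpha_m = 0$ with $\beta_0, \ldots, \beta_m \in \overline{\Q}$ not all zero. Because $\log\alpha_1, \ldots, \log\alpha_m$ are linearly independent over $\Q$, one first rules out $\beta_0 = 0$ (a relation among the logarithms alone with algebraic coefficients can, after passing to a suitable conjugate and clearing denominators, be reduced to a rational one, contradicting the hypothesis), and one also sees that some $\beta_j$ with $j \ge 1$ is nonzero. After renumbering and rescaling I may therefore solve for a single logarithm, writing $\log\alpha_m = \gamma_0 + \gamma_1\log\alpha_1 + \cdots + \gamma_{m-1}\log\alpha_{m-1}$ with all $\gamma_i \in \overline{\Q}$. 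The whole point of this substitution is that it lets a single entire function of several variables be expressed in two different ways, creating a tension between strong vanishing and a lower bound that will be fatal.

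First I would construct the auxiliary function. Using Siegel's lemma --- the pigeonhole principle applied to a homogeneous linear system over $\Z$ with algebraic coefficients --- I produce a nonzero family of rational integers $p(\lambda_0, \ldots, \lambda_m)$ of controlled height so that
$$
\Phi(z_1, \ldots, z_{m-1}) = \sum_{\lambda} p(\lambda)\, z_1^{\lambda_0}\,
\prod_{j=1}^{m-1} \alpha_j^{\lambda_j z_j}\,
\alpha_m^{\lambda_m(\gamma_1 z_1 + \cdots + \gamma_{m-1} z_{m-1})}
$$
vanishes, together with all its partial derivatives up to a prescribed order $T$, at every integer point of a small box. Here the relation for $\log\alpha_m$ is exactly what collapses the exponents, so that each vanishing condition becomes a linear equation in the $p(\lambda)$ with algebraic coefficients; a counting argument (more unknowns than equations) guarantees a nonzero integer solution of small size.

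The heart of the matter, and what I expect to be the \textbf{main obstacle}, is Baker's extrapolation step. Starting from vanishing to high order $T$ on a small box, I would apply the maximum modulus principle together with sharp size estimates on the coefficients to show that $\Phi$ continues to vanish --- now to a somewhat smaller order --- on a substantially larger box of integer points. Iterating this trade-off of multiplicity for range several times, with the parameters $L$ (degree), $T$ (order), the box dimensions, and the number of iterations all balanced carefully, forces $\Phi$ to vanish at far more points than its construction should permit. Getting these numerical parameters to close up is the genuinely delicate part of the argument.

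Finally I would extract the contradiction. The accumulated vanishing yields a linear system on the integers $p(\lambda)$ whose coefficient matrix is a generalized Vandermonde determinant in the distinct quantities $\lambda_1\log\alpha_1 + \cdots + \lambda_m\log\alpha_m$; their distinctness is guaranteed precisely by the $\Q$-linear independence of the logarithms, so the determinant is nonzero and every $p(\lambda)$ must vanish, contradicting the nontrivial solution from Siegel's lemma. Equivalently, the last stage can be phrased as a Liouville-type fundamental inequality: an explicit nonzero algebraic number is shown, through the analytic smallness of $\Phi$, to have absolute value below the bound permitted by its degree and height, which is impossible. Either route closes the contradiction and establishes the theorem.
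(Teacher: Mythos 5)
This statement is quoted by the paper as Baker's celebrated theorem, with no proof supplied (it is a prerequisite imported from the transcendence literature), so there is no internal argument to compare against; your proposal must therefore stand or fall as a self-contained proof of Baker's theorem, and as written it is a roadmap of the correct Gelfond--Baker strategy rather than a proof. The decisive content --- the explicit choice of the degree $L$, the order $T$, the box sizes and the number of extrapolation iterations, together with the height and size estimates that make the maximum-modulus step close up, and the final zero estimate or Liouville inequality with quantitative bounds --- is precisely what you defer, and you concede as much when you call the parameter balancing ``the genuinely delicate part.'' That balancing is not a detail: it is the entire difficulty of Baker's 1966 proof, and no referee could accept ``iterate the trade-off with parameters balanced carefully'' as a step. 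Since the paper itself treats the theorem as a black box, the honest options are either to cite it (as the paper does) or to carry out the full quantitative argument; the proposal does neither.

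Beyond incompleteness, there is a genuine error in your preliminary reduction. You dispose of the homogeneous case $\beta_0=0$ by claiming that a relation $\beta_1\log\alpha_1+\cdots+\beta_m\log\alpha_m=0$ with algebraic coefficients can be ``conjugated and cleared of denominators'' down to a rational relation. This fails: the identity is not an algebraic relation among algebraic numbers --- the $\log\alpha_j$ are in general transcendental --- so no automorphism of $\overline{\Q}$ acts on it, and there is no descent from $\overline{\Q}$-coefficients to $\Q$-coefficients. The homogeneous statement (that $\Q$-linear independence of the logarithms forces $\overline{\Q}$-linear independence) is itself the deep half of Baker's theorem --- already for $m=2$ it contains Gelfond--Schneider --- and in Baker's treatment it is handled by the same auxiliary-function induction, not by a shortcut. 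Relatedly, your auxiliary function is miswritten: after solving $\log\alpha_m=\gamma_0+\sum_{i<m}\gamma_i\log\alpha_i$, the construction must \emph{eliminate} $\alpha_m$, replacing $\alpha_m^{\lambda_m z}$ by $e^{\lambda_m\gamma_0 z}\prod_i \alpha_i^{\lambda_m\gamma_i z_i}$ (this inhomogeneous factor is why Baker carries a separate variable $z_0$ with the polynomial factor $z_0^{\lambda_0}$); your $\Phi$ instead retains $\alpha_m$ in the exponentials and drops $\gamma_0$ entirely, so the exponents do not collapse at integer points and the vanishing conditions never encode the hypothesized relation. As it stands, both the reduction and the construction would have to be repaired before the extrapolation machinery could even be set in motion.
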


Let $\L$ denote the $\overline{\Q}$-vector space 
generated by the logarithms of non-zero algebraic
numbers. We 
refer this as the space of Baker periods. Baker's theorem 
asserts that
every non-zero Baker period is transcendental.

We shall call the elements in the $\overline{\Q}$-vector space generated by 
the logarithms of non-zero algebraic numbers and $1$ as 
{\it extended Baker periods.}

We now recall the following far reaching conjecture in 
transcendence theory due to S. Schanuel.

\smallskip

{\it {Schanuel's Conjecture:} Suppose that 
$\alpha_1, \cdots, \alpha_n$ 
are complex numbers which are linearly 
independent over $\Q$.
Then the transcendence degree of the field
$$
\Q(\alpha_1, \cdots, \alpha_n,e^{\alpha_1}, 
\cdots, e^{\alpha_n})
$$
over $\Q$ is at least $n$}.

We shall need the following consequence 
of the above 
conjecture which is not difficult to deduce.
This was done in an earlier work of ours
(see \cite{GMR}
for details).

\smallskip
\noindent
\begin{prop}\label{sch}
Assume that Schanuel's conjecture is true. If 
$\alpha_1, \cdots, \alpha_n$ are 
non-zero algebraic numbers such that 
$\log\alpha_1, \cdots, \log\alpha_n$ are linearly independent 
over $\Q$, then 
$$
\log\alpha_1, \cdots, \log\alpha_n, \log\pi
$$ 
are algebraically independent. 
In particular, $\log{\pi}$ is a transcendental number 
which is not an extended Baker period.
\end{prop}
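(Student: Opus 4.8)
The plan is to deduce the proposition directly from Schanuel's conjecture by choosing an appropriate $\Q$-linearly independent set of exponents, applying the conjecture, and then discarding from the resulting field the algebraic quantities that appear among the exponentials. The governing identities are $e^{\log\alpha_i}=\alpha_i$, $e^{\log\pi}=\pi$ and $e^{2\pi i}=1$, together with $(2\pi i)^2=-4\pi^2$, which makes $2\pi i$ algebraic over $\Q(\pi)$ and, conversely, $\pi$ algebraic over $\Q(2\pi i)$. The first step is to record that $\log\alpha_1,\dots,\log\alpha_n,\log\pi$ are already linearly independent over $\Q$: given an integer relation (after clearing denominators), exponentiation gives $\prod_i\alpha_i^{a_i}\cdot\pi^b=1$, and since the left factor is algebraic while $\pi$ is transcendental (Lindemann) the coefficient $b$ of $\log\pi$ vanishes, whereupon the hypothesis forces the remaining coefficients to vanish.

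The heart of the argument is then a dichotomy according to whether the period $2\pi i$ lies in $W:=\operatorname{span}_\Q\{\log\alpha_1,\dots,\log\alpha_n\}$. Suppose first that $2\pi i\notin W$. Then $\log\alpha_1,\dots,\log\alpha_n,\log\pi,2\pi i$ are $\Q$-linearly independent (the same exponentiation trick handles any relation, using $e^{2\pi i c}=1$ for integer $c$). Applying Schanuel's conjecture to these $n+2$ numbers, the field $\Q(\log\alpha_1,\dots,\log\alpha_n,\log\pi,2\pi i,\alpha_1,\dots,\alpha_n,\pi,1)$ has transcendence degree at least $n+2$. Discarding the algebraic generators $\alpha_i$ and $1$, and using that $2\pi i$ is algebraic over $\Q(\pi)$, this transcendence degree equals that of $\Q(\log\alpha_1,\dots,\log\alpha_n,\log\pi,\pi)$. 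As the latter field is generated by exactly $n+2$ elements, its transcendence degree is precisely $n+2$ and these generators are algebraically independent; in particular so is the subset $\log\alpha_1,\dots,\log\alpha_n,\log\pi$.

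In the complementary case $2\pi i\in W$, we have $\pi^2=-(2\pi i)^2/4\in\Q(\log\alpha_1,\dots,\log\alpha_n)$, so $\pi$ is algebraic over $\Q(\log\alpha_1,\dots,\log\alpha_n,\log\pi)$. Here I would apply Schanuel's conjecture only to the $n+1$ independent numbers $\log\alpha_1,\dots,\log\alpha_n,\log\pi$, obtaining $\operatorname{trdeg}\Q(\log\alpha_1,\dots,\log\alpha_n,\log\pi,\pi)\ge n+1$; discarding the $\alpha_i$ and, crucially, $\pi$ (now algebraic over the rest), the field $\Q(\log\alpha_1,\dots,\log\alpha_n,\log\pi)$ still has transcendence degree at least $n+1$, hence exactly $n+1$, giving algebraic independence again. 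The main obstacle, and the reason for the case split, is precisely this branch/period ambiguity of the logarithm: the quantity $2\pi i$ may or may not already be a rational combination of the $\log\alpha_i$, and the two possibilities must be eliminated by complementary algebraic-dependence relations between $\pi$ and $2\pi i$.

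Finally, the \emph{in particular} follows formally. Taking $n=0$ gives that $\log\pi$ is transcendental. For the stronger claim, suppose $\log\pi=c_0+\sum_{j=1}^k c_j\log\beta_j$ with $c_j\in\overline{\Q}$ and $\beta_j\in\overline{\Q}\setminus\{0\}$; after reducing to a basis we may assume $\log\beta_1,\dots,\log\beta_k$ are $\Q$-linearly independent. The proposition applied to the $\beta_j$ shows $\log\beta_1,\dots,\log\beta_k,\log\pi$ are algebraically independent over $\Q$, equivalently over $\overline{\Q}$. But the assumed identity is a nontrivial degree-one polynomial relation over $\overline{\Q}$ (the coefficient of $\log\pi$ being $1$), a contradiction; the degenerate case $k=0$ gives $\log\pi=c_0\in\overline{\Q}$, contradicting transcendence. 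Hence $\log\pi$ is not an extended Baker period.
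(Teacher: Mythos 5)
Your proof is correct. The paper does not actually prove Proposition~\ref{sch} in the text --- it defers the deduction to the authors' earlier work \cite{GMR} --- and your argument (adjoining $2\pi i$ to tie $\pi$ to the logarithms, applying Schanuel to the enlarged $\Q$-linearly independent list, then discarding the algebraic generators $\alpha_i$, $1$, and the element $2\pi i$ algebraic over $\Q(\pi)$) is essentially that standard deduction, with your dichotomy on whether $2\pi i$ lies in ${\rm span}_{\Q}\{\log\alpha_1,\dots,\log\alpha_n\}$ being precisely the care needed to cover complex $\alpha_i$ and arbitrary branches of the logarithm, where the extended list can fail to be $\Q$-linearly independent.
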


Finally, we shall need the following theorem of Nesterenko
(\cite{nes1}, see also \cite{nes2}).
\begin{thm} \label{nester}
Let $\wp(z)$ be a Weierstrass $\wp$-function with
\index{Weierstrass $\wp$-function}
algebraic invariants $g_2, g_3$ and with complex multiplication
by an order of an imaginary quadratic field $K$.  Let
$\omega$ be a non-zero period and  $\eta$ the 
corresponding quasi-period.
Then for any $\tau \in K$ with $\Im(\tau) \neq 0$,  
each of these sets
$$\{ \pi, \omega, e^{2\pi i \tau} \} \phantom{m} 
{\rm and} \phantom{m}   \{ \omega, \eta,
e^{2\pi i \tau} \} $$
is algebraically independent over $\Q$.  
\end{thm}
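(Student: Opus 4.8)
The plan is to derive this statement as a corollary of Nesterenko's foundational theorem on the algebraic independence of values of the Eisenstein series, and to treat the passage from that ``modular'' form of the result to the present ``elliptic'' one as the actual task. Writing $q = e^{2\pi i \tau}$ and
$$
E_2(q) = 1 - 24\sum_{n\ge1}\sigma_1(n)q^n,\quad
E_4(q) = 1 + 240\sum_{n\ge1}\sigma_3(n)q^n,\quad
E_6(q) = 1 - 504\sum_{n\ge1}\sigma_5(n)q^n,
$$
the engine is the assertion that for every $q$ with $0<|q|<1$ one has $\operatorname{trdeg}_\Q \Q(q,E_2(q),E_4(q),E_6(q)) \ge 3$. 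First I would record the classical relations between these functions and the lattice data: if $\Lambda = \Z\omega + \Z\tau\omega$ is the period lattice of $\wp$, with $\omega$ the chosen period and $\eta$ its quasi-period, then
$$
E_4(q) = \tfrac34\, g_2\,\frac{\omega^4}{\pi^4},\qquad
E_6(q) = \tfrac{27}{8}\, g_3\,\frac{\omega^6}{\pi^6},\qquad
E_2(q) = 3\,\frac{\eta\,\omega}{\pi^2}.
$$

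Since $g_2,g_3\in\overline\Q$, these identities show that $\Q(q,E_2,E_4,E_6)$ is contained in the algebraic closure of $\Q(q, a, b)$, where $a := \omega/\pi$ and $b := \eta\omega/\pi^2$. Comparing transcendence degrees, the inner field has at most three generators while Nesterenko's theorem forces degree at least three, and so I would conclude that $a$, $b$ and $q$ are algebraically independent over $\Q$. At the finitely many CM points where $g_2$ or $g_3$ vanishes one simply recovers $a$, up to a root, from whichever of $E_4, E_6$ is nonzero, which is always possible because $\Delta = g_2^3-27g_3^2\neq0$; so the argument is unaffected.

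The final step converts the pair $(a,b)$ back into the quantities of interest, and this is where complex multiplication is essential. In the CM case, Legendre's relation together with the algebraic relation between the two quasi-periods imposed by the endomorphism corresponding to $\tau$ yields algebraic numbers $c_1, c_2$ with $\eta = c_1\omega + c_2\,\pi/\omega$; here $c_2\neq0$ is automatic, since otherwise $b = c_1 a^2$ would contradict the algebraic independence of $a$ and $b$. Substituting gives $b = c_1 a^2 + c_2/\pi$, whence $\pi = c_2/(b - c_1 a^2) \in \overline{\Q(a,b)}$, and then $\omega = a\pi$ and $\eta = c_1\omega + c_2\pi/\omega$ also lie in $\overline{\Q(a,b)}$. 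Thus $\overline{\Q(\pi,\omega)} = \overline{\Q(a,b)} = \overline{\Q(\omega,\eta)}$, and adjoining $q$ preserves these equalities, so $\operatorname{trdeg}_\Q\Q(q,\pi,\omega) = \operatorname{trdeg}_\Q\Q(q,\omega,\eta) = \operatorname{trdeg}_\Q\Q(q,a,b) = 3$, which is precisely the algebraic independence of each of the two displayed triples. The main obstacle is the engine itself: a self-contained proof of Nesterenko's modular theorem would require his algebraic independence criterion controlling a sequence of auxiliary polynomials of bounded degree and height that are small at the point, the Ramanujan differential system satisfied by $(E_2,E_4,E_6)$ in order to build and differentiate the auxiliary function, and, above all, the multiplicity estimate for that system. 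This last ingredient is the genuinely hard heart of the matter, whereas the CM reduction sketched above is comparatively routine bookkeeping.
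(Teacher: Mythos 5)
The paper does not prove this statement at all: it is listed among the ``Transcendental Prerequisites'' and quoted directly from Nesterenko \cite{nes1} (see also \cite{nes2}), so there is no internal proof to compare against. Your reconstruction is, in substance, the standard deduction of the elliptic statement from Nesterenko's modular theorem on $\operatorname{trdeg}_{\Q}\Q(q,E_2(q),E_4(q),E_6(q))\ge 3$, and the main chain is sound: the normalizations $E_4=\tfrac34 g_2\omega^4/\pi^4$, $E_6=\tfrac{27}{8}g_3\omega^6/\pi^6$, $E_2=3\eta\omega/\pi^2$ are correct, the containment $\Q(q,E_2,E_4,E_6)\subseteq\overline{\Q(q,a,b)}$ with $a=\omega/\pi$, $b=\eta\omega/\pi^2$ gives the algebraic independence of $q,a,b$ (and for this direction you do not even need the remark about recovering $a$ when $g_2$ or $g_3$ vanishes), and the CM relation $\eta=c_1\omega+c_2\pi/\omega$ with $c_1,c_2\in\overline{\Q}$ correctly converts $(a,b)$ into either of the two triples; your observation that $c_2\ne 0$ can be forced by the already-established independence of $a$ and $b$ is a nice self-contained shortcut, provided you take the classical Masser-type relation in exactly that shape as given.

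There is, however, one genuine gap: you silently identify the arbitrary $\tau\in K$, $\Im(\tau)\neq 0$, of the statement with the lattice ratio, writing $\Lambda=\Z\omega+\Z\tau\omega$. The theorem asserts independence for \emph{every} $\tau\in K$ with nonzero imaginary part, with no relation to the chosen period, and moreover the given $\omega$ need not be primitive, hence need not be part of any basis $(\omega,\tau\omega)$. Both points are repairable in a few lines, but they must be said. For primitivity, write $\omega=m\omega_1$ with $\omega_1$ primitive and $m\in\Z$; then $\eta=m\eta_1$, so independence for $(\omega_1,\eta_1)$ transfers. For $\tau$: since the curve has CM by an order of $K$, the ratio $\tau_0$ of a basis of $\Lambda$ lies in $K\setminus\Q$, so $K=\Q(\tau_0)$ and $\{1,\tau_0\}$ is a $\Q$-basis of $K$; hence $\tau=s+r\tau_0$ with $r,s\in\Q$, $r\neq 0$, which gives $q=e^{2\pi i\tau}=e^{2\pi i s}\,q_0^{\,r}$ with $q_0=e^{2\pi i\tau_0}$ a root of unity times a rational power of $q_0$. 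Thus $q$ and $q_0$ are mutually algebraic, $\overline{\Q(q)}=\overline{\Q(q_0)}$, and all transcendence-degree computations carried out with $q_0$ (normalized so that $0<|q_0|<1$, replacing $\tau_0$ by $-\tau_0$ if necessary) transfer verbatim to $q$. With these two bridging remarks inserted, your argument is complete and agrees with the route taken in the literature the paper cites.
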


\section{The case of the Riemann zeta function}

We begin by considering the following function
$$
f : (0, \infty) \to \C
$$
given by 
$$
f(x) =~~ \sum_{\rho} \frac{x^{\rho}}{\rho} ~~:=~~ 
\lim_{T \to \infty}~~ \sum_{|t| < T} \frac{x^{\rho}}{\rho},  
$$
where $\rho = \sigma + i t $ runs over the non-trivial 
zeros of the Riemann zeta function
in the critical strip $0 < \Re(s) < 1$.
Recall that 
$$
f(1) = \frac{1}{2} \gamma + 1 - \frac{1}{2} \log 4\pi,
$$ 
where $\gamma$
is the Euler's constant. It is not known whether
$f(1)$ is an irrational number.

We are interested in studying the values taken 
by the function $f$ at algebraic points. 
We first have the following theorem:

\begin{thm}
The set  $X$ given by 
$$ 
X := \{ f(x) : x \in (1,\infty) 
\cap \overline{\Q}  ~~\}
$$
has at most one algebraic element.
\end{thm}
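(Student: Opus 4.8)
The plan is to convert the sum over zeros into the Riemann--von Mangoldt explicit formula and then feed the result into Baker's theorem. Recall that for $x>1$ one has
\[
\psi_0(x) = x - \sum_{\rho} \frac{x^{\rho}}{\rho} - \log(2\pi) - \frac{1}{2}\log\left(1 - x^{-2}\right),
\]
where $\psi_0$ is the normalized Chebyshev function $\psi_0(x) = \tfrac12\big(\psi(x^-)+\psi(x^+)\big)$ with $\psi(x) = \sum_{n \le x}\Lambda(n)$, and the sum over $\rho$ is understood in exactly the symmetric sense $\lim_{T\to\infty}\sum_{|t|<T}$ used to define $f$. First I would record the resulting identity
\[
f(x) = x - \psi_0(x) - \log(2\pi) - \frac{1}{2}\log\left(1 - x^{-2}\right).
\]

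Next I would read off the arithmetic nature of each term when $x \in (1,\infty)\cap\overline{\Q}$. Here $\psi_0(x) = \sum_{p^k \le x}\log p$ (with the obvious half-weight at prime powers) is a non-negative rational combination of logarithms of primes, hence a Baker period; and $1 - x^{-2}$ is a positive algebraic number, so $\log(1-x^{-2}) \in \L$ as well. Writing $\log(2\pi) = \log 2 + \log\pi$, this exhibits $f(x) = \beta(x) - \log\pi$, where $\beta(x) := x - \psi_0(x) - \log 2 - \tfrac12\log(1-x^{-2})$ is an extended Baker period. If $f(x)$ were algebraic then $\log\pi = \beta(x) - f(x)$ would be an extended Baker period; by \propref{sch} this is impossible, but only under Schanuel's conjecture. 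This is precisely why a single algebraic value cannot be excluded unconditionally, and why the theorem claims only that $X$ has \emph{at most one} algebraic element.

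The point of the theorem is that comparing two values removes the dependence on $\log\pi$ and makes the conclusion unconditional. Suppose, for contradiction, that $x_1 > x_2$ are distinct elements of $(1,\infty)\cap\overline{\Q}$ with $f(x_1), f(x_2)$ both algebraic. Subtracting the two identities, the term $\log(2\pi)$ cancels and I obtain
\[
B := \big(\psi_0(x_1)-\psi_0(x_2)\big) + \frac{1}{2}\log\frac{1-x_1^{-2}}{1-x_2^{-2}} = (x_1 - x_2) - \big(f(x_1)-f(x_2)\big) \in \overline{\Q}.
\]
On the other hand $B$ is manifestly a Baker period, being a rational combination of logarithms of positive algebraic numbers. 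Since $x_1 > x_2 > 1$, the term $\psi_0(x_1)-\psi_0(x_2) \ge 0$ and $\frac{1-x_1^{-2}}{1-x_2^{-2}} > 1$, so $B > 0$; in particular $B \ne 0$. But by \thmref{baker} every nonzero Baker period is transcendental, contradicting $B \in \overline{\Q}$. Hence at most one $x$ can yield an algebraic value, which is the assertion.

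The main obstacle is the analytic step: establishing the explicit formula in the precise symmetric-limit form matching the definition of $f$, and checking that the normalization $\psi_0$ (rather than $\psi$) is what appears, so that $\psi_0(x)$ is genuinely a Baker period for algebraic $x$ --- the prime-power weighting being the only mild technical wrinkle. Once that bridge is in place the transcendence is entirely elementary: the cancellation of $\log(2\pi)$ in the difference reduces everything to a single application of Baker's theorem, together with the monotonicity of $\psi_0$ and the sign of $\log\frac{1-x_1^{-2}}{1-x_2^{-2}}$.
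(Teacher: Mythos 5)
Your proof is correct and takes essentially the same route as the paper: the von Mangoldt explicit formula exhibits $f(x)-x+\log 2\pi$ as a Baker period, and the difference at two distinct algebraic points is then a nonzero algebraic Baker period, contradicting Baker's theorem. Your explicit positivity check ($\psi_0$ nondecreasing together with $\log\frac{1-x_1^{-2}}{1-x_2^{-2}}>0$) is just the unpacked form of the paper's observation that $g(x)=-\psi_0(x)-\tfrac{1}{2}\log\left(1-x^{-2}\right)$ is strictly decreasing on $(1,\infty)$.
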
 
\begin{proof}
For $x >1$, consider the function $\psi_0(x)$ given by
$$
\psi_0(x) 
= 
\begin{cases}
\sum_{n \leq x} \Lambda(n) & \text{if $x$ is not a prime power;}\\
\sum_{n \leq x} \Lambda(n) - \frac{1}{2}\Lambda(x) & \text{otherwise.}
\end{cases}
$$
Observe that $\psi_0(x)$ is a Baker period.  
Here 
$$
\Lambda(n) =
\begin{cases}
\log p & \text{if $n$ is a power of a prime number $p$;}\\
0 & \text{otherwise.}
\end{cases}
$$ 
is the classical von Mangoldt function.

When $x >1$, one has
the following explicit formula of 
von Mangoldt (see page 77 of \cite{AI}, 
for instance)
$$
f(x) = x - \psi_0(x) - \log 2\pi - 
\frac{1}{2}\log \left( 1 - \frac{1}{x^2}\right).
$$

For  $x>1$, consider the function
$$
g(x) = f(x) - x + \log 2\pi = - \psi_0(x) - 
\frac{1}{2}\log \left( 1 - \frac{1}{x^2}\right).
$$
Note that $g(x)$ is a strictly decreasing 
function in $(1,\infty)$.
Now suppose that $f(x)$ is algebraic at 
two distinct algebraic points,
say $\alpha$ and $\beta$. Then 
$$
g(\alpha) - g(\beta) = f(\alpha) - \alpha - f(\beta) + \beta
$$ 
is a non-zero algebraic number. But this 
is also a Baker period,
a contradiction.
\end{proof}

\begin{thm}
The function $f$ has infinitely many zeros in $(1, \infty)$ 
of which at most one is algebraic.
\end{thm}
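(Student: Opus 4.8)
The plan is to treat the two assertions separately, handling the algebraicity statement by the same Baker-period argument used just above and reserving the analytic work for the infinitude of zeros. For the claim that at most one zero is algebraic, I would reuse the auxiliary function from the previous proof, $g(x) = f(x) - x + \log 2\pi = -\psi_0(x) - \tfrac{1}{2}\log(1 - 1/x^2)$. Suppose $\alpha \neq \beta$ were two algebraic zeros of $f$ in $(1,\infty)$. On one hand, using $f(\alpha)=f(\beta)=0$ we get $g(\alpha) - g(\beta) = (-\alpha + \log 2\pi) - (-\beta + \log 2\pi) = \beta - \alpha$, a nonzero algebraic number. On the other hand, $\psi_0(\alpha)$ and $\psi_0(\beta)$ are Baker periods and $1 - 1/\alpha^2,\, 1 - 1/\beta^2$ are positive algebraic numbers, so $g(\alpha)-g(\beta)$ lies in the space $\L$ of Baker periods. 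By \thmref{baker} every nonzero Baker period is transcendental, contradicting $\beta - \alpha \in \overline{\Q}$, $\beta - \alpha \neq 0$. Hence $f$ has at most one algebraic zero.

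The harder part is the existence of infinitely many zeros, and here I would first record the shape of $f$ on $(1,\infty)$ coming from the von Mangoldt formula. Writing $f(x) = -(\psi_0(x) - x) - \log 2\pi - \tfrac{1}{2}\log(1 - 1/x^2)$, every term is continuous on $(1,\infty)$ except $-\psi_0(x)$, which is a step function whose only jumps occur at prime powers $p^k$, where it decreases by $\Lambda(p^k) = \log p$. Thus the sole discontinuities of $f$ are \emph{downward} jumps. The key elementary observation is then that $f$ can cross from negative to positive only through its continuous part: given $a < b$ with $f(a) < 0 < f(b)$, set $c = \sup\{x \in [a,b] : f(x) \le 0\}$. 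For $x$ slightly larger than $c$ we have $f(x) > 0$, so the right limit satisfies $f(c^+) \ge 0$; were $c$ a prime power, the downward jump would force $f(c^-) > f(c^+) \ge 0$, so $f>0$ just to the left of $c$, contradicting the definition of $c$. Therefore $c$ is a point of continuity, and comparing left and right behaviour gives $f(c) = 0$. Hence each sign change of $f$ yields an honest zero.

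It remains to show that $f$ takes negative and positive values for arbitrarily large $x$. Since $\tfrac{1}{2}\log(1 - 1/x^2) \to 0$, we have $f(x) = -(\psi_0(x) - x) - \log 2\pi + o(1)$. The input I would invoke is the classical oscillation theorem of Littlewood, $\psi(x) - x = \Omega_{\pm}(\sqrt{x})$; in particular $\psi(x)-x$ is positive for arbitrarily large $x$ and negative for arbitrarily large $x$, with unbounded amplitude. Because these oscillations dominate the constant $\log 2\pi$ once $x$ is large, every instance with $\psi_0(x) \ge x$ gives $f(x) < 0$, while every instance with $\psi_0(x) - x$ sufficiently negative gives $f(x) > 0$, and both occur for arbitrarily large $x$. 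Interleaving points $a_1 < b_1 < a_2 < b_2 < \cdots$ with $f(a_i) < 0 < f(b_i)$ and applying the observation above to each pair $(a_i,b_i)$ produces infinitely many distinct zeros of $f$ in $(1,\infty)$.

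The main obstacle is precisely this oscillation input; the topological mechanism converting sign changes into zeros is robust and uses only that the jumps of $f$ point downward. What genuinely requires analytic number theory is that $\psi(x) - x$ is infinitely often positive and infinitely often negative, which I would derive in the standard way from Landau's theorem on Dirichlet integrals with an eventually constant-sign integrand: were $\psi(x) - x$ of one sign for all large $x$, the abscissa of convergence of $\int_1^\infty (\psi(x) - x)\, x^{-s-1}\,dx = -\tfrac{1}{s}\tfrac{\zeta'(s)}{\zeta(s)} - \tfrac{1}{s-1}$ would have to be a real singularity, whereas the governing singularities are the nonreal zeros $\rho$ of $\zeta(s)$; this contradiction delivers the required oscillation. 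A secondary point to check is the passage from $\psi_0$ to $\psi$, where the two differ by $\tfrac{1}{2}\Lambda(x) = O(\log x)$, which is negligible against the $\sqrt{x}$-oscillation.
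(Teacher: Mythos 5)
Your proof is correct and rests on the same skeleton as the paper's: the algebraicity half is verbatim the paper's argument (for two algebraic zeros $\alpha\neq\beta$, the difference $g(\alpha)-g(\beta)=\beta-\alpha$ is a nonzero algebraic Baker period, contradicting \thmref{baker}), and the existence half uses the same two inputs, namely the von Mangoldt explicit formula and the oscillation $\psi_0(x)-x=\Omega_{\pm}(\sqrt{x})$, which the paper simply cites from Ingham, p.~91 (the $\sqrt{x}$-result is E.~Schmidt's; Littlewood's theorem is the stronger $\Omega_{\pm}(\sqrt{x}\log\log\log x)$). Where you genuinely diverge is the mechanism converting sign changes into zeros. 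The paper localizes to an interval $[N_0,N_0+1]$ with $f(N_0)<0<f(N_0+1)$: since $\psi_0$ jumps only at integers, $f$ is continuous and increasing on the open interval, so either the intermediate value theorem applies there, or $f<0$ throughout it, in which case $N_0+1$ cannot be a prime power (a jump there would only push $f(N_0+1)$ below the left limit, which is $\le 0$) and continuity at $N_0+1$ finishes. Your sup-of-the-sublevel-set lemma, using only that every discontinuity of $f$ is a \emph{downward} jump, is a more robust variant: it nowhere uses that the jumps occur at integers, and would transfer unchanged to the Selberg-class analogues later in the paper. One hairline point to patch: when you derive the contradiction at a prime power $c$, you must also exclude $f(c)\le 0$ (otherwise $c$ itself lies in the set and is legitimately its supremum); this is immediate because the explicit formula assigns $\psi_0$ the averaged value at jumps, so $f(c)$ lies strictly between $f(c^+)\ge 0$ and $f(c^-)>0$ — and under the paper's stated right-continuity convention $f(c)=f(c^+)$, where $f(c)=0$ would itself be a zero, so the conclusion stands either way. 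Your optional Landau-theorem derivation of the oscillation is sound, and applying it to $\psi(x)-x+C$ for arbitrary constants $C$, as your argument implicitly requires, does supply negative excursions large enough to beat $\log 2\pi$; the paper bypasses this by citing the $\Omega_{\pm}$ statement directly.
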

\begin{proof}
Note that for any algebraic $x>1$,
$f(x) - x + \log 2\pi$ is a Baker period. Thus 
by Baker's theorem, $f$ is injective on the
set of algebraic numbers greater than 1. Hence $f$ can have 
at most one algebraic zero. 

We now show that $f$ has infinitely many zeros in $(1, \infty)$.
Let us write $f$ as 
$$
f(x) = h(x) -  \psi_0(x)
$$
where 
$h(x) = x   - \frac{1}{2}\log \left( 1 - \frac{1}{x^2}\right) - \log 2\pi. $ 

Since $\psi_0(n) - n = \Omega_{\pm}(n^{\frac{1}{2}})$ 
(see \cite{AI}, page 91), the sequence
$\left\{f(n)\right\}_{n \in \N}$ changes sign infinitely often. 
In particular, there exists infinitely many $N \in \N$ for which
$f(N) < 0$ while $f(N+1)~>~0$.
Note that  $f$ is continuous in $(1, \infty)$ except at prime 
powers where it is right continuous.
Also $h(x)$ is a strictly increasing continuous function in $(1, \infty)$.
Let $N_0 >1$ be a natural number such that $f(N_0) < 0$ while
$f(N_0+1)~>~0$. If $f(x) \geq 0$ for some $x \in (N_0, N_0+1)$, 
we have a zero
of $f$ in $(N_0, N_0+1)$. Assume otherwise. Since 
$\psi_0(x)$ is non-negative and constant in $[N_0,N_0+1)$, 
$N_0+1$ cannot be a prime power. Thus the function $f$ is continuous
in $[N_0, N_0+1]$ and hence must have a zero in this interval.
\end{proof}

We also have the following conditional result.

\begin{thm}
Assume Schanuel's conjecture. Then $X$ has no algebraic element.
\end{thm}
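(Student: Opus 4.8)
The plan is to reuse the explicit von Mangoldt formula from the proof of the previous theorem and to isolate the single transcendental obstruction, which comes from $\log\pi$. For algebraic $x>1$ we have
$$
f(x) = x - \psi_0(x) - \log 2\pi - \tfrac{1}{2}\log\left(1 - \tfrac{1}{x^2}\right).
$$
First I would record that each of the quantities $\psi_0(x)$, $\tfrac{1}{2}\log(1-1/x^2)$ and $\log 2$ is a Baker period: $\psi_0(x)$ is a finite $\Q$-linear combination of logarithms of primes, while $1-1/x^2=(x^2-1)/x^2$ is a non-zero algebraic number whenever $x>1$ is algebraic, so its logarithm lies in $\L$. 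Hence, writing $\log 2\pi = \log 2 + \log\pi$, I would collect all of these into a single Baker period $\beta(x)\in\L$ and rewrite
$$
f(x) + \log\pi = x + \beta(x),
$$
so that the right-hand side is an extended Baker period (an algebraic number plus an element of $\L$).

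Next I would argue by contradiction. Suppose some $x_0\in(1,\infty)\cap\overline{\Q}$ yields an algebraic value $f(x_0)=\alpha\in\overline{\Q}$. Then
$$
\log\pi = x_0 + \beta(x_0) - \alpha,
$$
and since $x_0-\alpha$ is algebraic and $\beta(x_0)$ is a Baker period, the right-hand side is an extended Baker period. But this contradicts \propref{sch}, which asserts, under Schanuel's conjecture, that $\log\pi$ is not an extended Baker period. Therefore $f(x)$ is transcendental for every algebraic $x>1$, and $X$ contains no algebraic element.

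The argument is short because the structural work has already been done: after stripping away the Baker periods and the algebraic term $x$, the one genuinely transcendental ingredient in the explicit formula is exactly $-\log\pi$. The only point requiring care is the bookkeeping — ensuring that the algebraic term $x$ (and the putative value $\alpha$) is absorbed by the $1$-generator of the extended Baker periods rather than conflated with a logarithm, so that the conclusion lands precisely on the statement of \propref{sch}. The earlier unconditional results sufficed to prove injectivity of $f$ on the algebraic numbers, hence that $X$ has at most one algebraic element; upgrading ``at most one'' to ``none'' is exactly where Schanuel's conjecture, through the non-membership of $\log\pi$ in the extended Baker periods, becomes indispensable.
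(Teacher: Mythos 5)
Your proof is correct and takes essentially the same route as the paper: the paper's own (one-line) argument likewise observes that algebraicity of $f(x)$ and $x$ would force $\log\pi$ into the $\overline{\Q}$-span of $1$ and logarithms of non-zero algebraic numbers, contradicting \propref{sch} under Schanuel's conjecture. Your version simply makes explicit the bookkeeping (that $\psi_0(x)$, $\log 2$ and $\tfrac{1}{2}\log(1-1/x^2)$ are Baker periods) that the paper leaves implicit.
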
 
\begin{proof}
Suppose that both $f(x)$ and $x$ are algebraic. Then  $\log \pi$ lies
in the $\overline{\Q}$-vector space generated by logarithms of 
non-zero algebraic numbers
and $1$. But by Proposition \ref{sch}, this is not possible if 
we assume Schanuel's conjecture.
\end{proof}

We now consider the case for $0 <x <1$.
When $0< x< 1$, one has the following expression 
as indicated by (Ingham \cite{AI}, page 81):
$$
{\sum_{n \leq 1/x}}'~~ \frac{\Lambda(n)}{n} 
= 
-\log x - \gamma ~+~ \sum_{\rho}\frac{x^\rho}{\rho}
~+~ \frac{1}{2}\log \frac{1+x}{1-x} -x, 
$$
where $\gamma$ denotes the Euler's constant.
The dash in the sum means that
 there is a correction factor of $1/2$ in the last term
of the sum involving the von Mangoldt function
when $x$ is the reciprocal of some prime power.

The above expression can be deduced  by considering 
the following integral
$$
\frac{1}{2\pi i}\int_{3-i \infty}^{3+ i \infty} \frac{x^{1-s}}{1-s} 
~\frac{\zeta'}{\zeta}(s) ~ds.
$$  
By Perron's formula, this integral is equal to the left hand 
side of the above expression.
As with the explicit formula for $x>1$, completing this integral 
into a rectangular contour,
we will have contributions exactly from the residues of the 
poles of $\frac{x^{1-s}}{1-s} ~\frac{\zeta'}{\zeta}(s)$ in 
the complex plane. The double pole at $s=1$ contributes
the  factor $-\log x - \gamma$. The poles from 
non-trivial zeros contribute the factor
$$
\sum_{\rho}\frac{x^{1-\rho}}{1-\rho} 
= \sum_{\rho}\frac{x^\rho}{\rho}.
$$ 
Finally, the trivial zeros of the zeta function contribute 
$$ 
\sum_{n=1}^{\infty} \frac{x^{2n+1}}{2n+1}
=
\frac{1}{2}\log \frac{1+x}{1-x} -x.
$$

Arguing as  earlier, we can now deduce the following result:
\begin{thm}
The set  $Y$ given by 
$$ 
Y := \{ f(x) - x : x \in (0,1) \cap \overline{\Q}  ~~\}
$$
has at most one algebraic element. In particular,
$f$ has at most one algebraic zero in $(0,1)$.\end{thm}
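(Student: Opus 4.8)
The plan is to run the same Baker-period argument that settled the range $x>1$, the essential simplification being that the subtraction of $x$ is already built into the definition of $Y$. First I would rearrange the explicit formula of Ingham quoted above to isolate the quantity of interest, obtaining for every $x\in(0,1)$
$$
f(x) - x = {\sum_{n \leq 1/x}}'~\frac{\Lambda(n)}{n} + \log x + \gamma - \frac{1}{2}\log\frac{1+x}{1-x},
$$
where the term $x$ on the right of Ingham's identity cancels against the subtracted $x$. The Euler constant $\gamma$ appears here, and since its arithmetic nature is unknown this identity says nothing decisive about a single value $f(x)-x$; this is exactly the feature that forced a monotonicity argument in the range $x>1$ and which we shall now bypass.

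The crux is that $\gamma$ is a constant and so cancels in any difference of two values. Suppose, towards a contradiction, that $Y$ contains two distinct algebraic elements $f(\alpha)-\alpha$ and $f(\beta)-\beta$ with $\alpha,\beta\in(0,1)\cap\overline{\Q}$. Subtracting the two instances of the displayed identity, I would obtain
$$
(f(\alpha)-\alpha)-(f(\beta)-\beta) = \Big[{\sum_{n\leq 1/\alpha}}'\frac{\Lambda(n)}{n} - {\sum_{n\leq 1/\beta}}'\frac{\Lambda(n)}{n}\Big] + \log\frac{\alpha}{\beta} - \frac{1}{2}\log\frac{(1+\alpha)(1-\beta)}{(1-\alpha)(1+\beta)},
$$
in which the two copies of $\gamma$ have disappeared. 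Each finite von Mangoldt sum is a rational combination of numbers $\log p$ and hence a Baker period; $\log(\alpha/\beta)$ is the logarithm of a positive algebraic number; and the remaining term is half the logarithm of the positive algebraic number $(1+\alpha)(1-\beta)/((1-\alpha)(1+\beta))$. Consequently the whole difference lies in the space $\L$ of Baker periods.

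To finish, I would apply Baker's theorem (\thmref{baker}) in the form that every non-zero Baker period is transcendental. Because the two elements of $Y$ were assumed distinct their difference is non-zero, while the assumption that both are algebraic makes the difference algebraic; a non-zero algebraic Baker period cannot exist, which is the required contradiction and proves that $Y$ has at most one algebraic element. For the final assertion I would observe that an algebraic zero $\alpha\in(0,1)$ of $f$ contributes the element $f(\alpha)-\alpha=-\alpha\in Y$, which is algebraic, so two distinct algebraic zeros $\alpha\neq\beta$ would furnish the two distinct algebraic elements $-\alpha\neq-\beta$ of $Y$, contradicting what has just been shown.

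I do not expect a serious obstacle: the transcendence mechanism is immediate once one sees that $\gamma$ cancels and that, unlike the case $x>1$, no monotonicity or injectivity of $f(x)-x$ is needed, since distinctness of the two elements already supplies non-vanishing. The only point demanding a little care is the bookkeeping for the primed von Mangoldt sums, namely checking that the prime-power correction factor keeps each $\Lambda(n)/n$ a rational multiple of some $\log p$, and confirming that on $(0,1)$ all logarithmic arguments above are positive algebraic numbers so that their logarithms genuinely lie in $\L$.
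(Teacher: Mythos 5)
Your proof is correct and follows essentially the same route as the paper's: rewrite Ingham's formula so that $f(x)-x-\gamma$ is a Baker period for algebraic $x\in(0,1)$, take the difference of two values so that $\gamma$ cancels, and invoke Baker's theorem to rule out a non-zero algebraic Baker period. Your observation that distinctness of two elements of $Y$ already supplies the needed non-vanishing (so no monotonicity argument is required, unlike the case $x>1$) is exactly the simplification implicit in the paper's terse ``argue as in the earlier theorem,'' and your derivation of the ``in particular'' clause from $f(\alpha)=0\Rightarrow -\alpha\in Y$ matches the intended reading.
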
 
\begin{proof}
For $0 < x< 1$,
$$ 
f(x) - x - \gamma  = {\sum_{n\leq 1/x}}'~~ \frac{\Lambda(n)}{n} 
+\log x  - \frac{1}{2}\log \frac{1+x}{1-x}  
$$
and hence a Baker period if $x$ is algebraic.
If there are two algebraic values in the set, we argue as we did in our
earlier theorem.
\end{proof}

Also, we immediately observe the following:
\begin{cor}
If $f(x)$ is algebraic for some algebraic $x$ in $(0,1)$,
then $\gamma$ is transcendental.
\end{cor}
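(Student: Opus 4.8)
The plan is to read off the transcendence of $\gamma$ directly from the identity established in the proof of the preceding theorem. There, for algebraic $x\in(0,1)$ one has
\[
f(x)-x-\gamma = {\sum_{n\le 1/x}}'\,\frac{\Lambda(n)}{n}+\log x-\frac{1}{2}\log\frac{1+x}{1-x} =: P,
\]
and the right-hand side $P$ is a Baker period, i.e.\ an element of $\L$, since each summand is a rational multiple of a logarithm of a non-zero algebraic number (the $\Lambda(n)/n$ supply rational multiples of $\log p$, and $x$, $1+x$, $1-x$ are algebraic). Under the hypothesis that $f(x)$ is algebraic, the number $f(x)-x$ is algebraic, so the identity exhibits
\[
\gamma = \bigl(f(x)-x\bigr)-P
\]
as an algebraic number minus a Baker period.

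First I would argue by contradiction: assume $\gamma\in\overline{\Q}$. Then $P=(f(x)-x)-\gamma$ is a difference of algebraic numbers, hence algebraic. But $P\in\L$, and by Baker's theorem (Theorem~\ref{baker}) every non-zero element of $\L$ is transcendental; equivalently $\overline{\Q}\cap\L=\{0\}$. Therefore $P=0$, which forces $\gamma=f(x)-x$. The whole question thus collapses to deciding whether the Baker period $P$ can vanish for the given $x$.

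The main obstacle is precisely the non-vanishing of $P$. Exponentiating, the relation $P=0$ is equivalent to the single multiplicative identity
\[
x^{2}\,\frac{1-x}{1+x}\prod_{p\le 1/x}p^{2c_p}=1,
\qquad
c_p=\sum_{k:\,p^{k}\le 1/x}\frac{1}{p^{k}},
\]
among algebraic numbers. Whenever this fails, $P$ is a non-zero Baker period, hence transcendental by Theorem~\ref{baker}, and then $\gamma=(f(x)-x)-P$ is transcendental as required. I therefore expect the genuinely delicate point to be excluding the possibility that this multiplicative relation holds at an algebraic $x$ at which $f(x)$ is simultaneously algebraic; short of such an exclusion, the sharpest honest form of the conclusion is that $\gamma$ is transcendental unless it coincides with the explicit algebraic number $f(x)-x$.
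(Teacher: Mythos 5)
Your reconstruction is exactly the route the paper intends: the corollary is presented as an immediate consequence of the identity $f(x)-x-\gamma=P$ from the proof of the preceding theorem, where $P={\sum_{n\le 1/x}}'\,\frac{\Lambda(n)}{n}+\log x-\frac{1}{2}\log\frac{1+x}{1-x}$ is a Baker period, combined with the dichotomy from Theorem~\ref{baker} that an element of $\L$ is either zero or transcendental. Your bookkeeping is correct ($\gamma=(f(x)-x)-P$, $\overline{\Q}\cap\L=\{0\}$, and the exponentiated form $x^{2}\,\frac{1-x}{1+x}\prod_{p\le 1/x}p^{2c_p}=1$ of the relation $P=0$ is accurate). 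But your final conclusion is strictly weaker than the stated corollary: you leave open the degenerate case $P=0$, in which $\gamma=f(x)-x$ would be algebraic, and you explicitly decline to exclude it. Relative to the statement being proved this is a genuine gap --- though one must say in your defence that it is a gap you share with the paper itself, which offers no proof at all (``we immediately observe'') and silently assumes the Baker period in question is non-zero.

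The case $P=0$ can in fact be excluded, but by analytic rather than transcendental means, which is the one idea missing from your write-up. Writing $y=1/x>1$, one has
\[
P(x)=\Bigl({\sum_{n\le y}}'\,\frac{\Lambda(n)}{n}-\log y\Bigr)-\frac{1}{2}\log\frac{1+x}{1-x},
\]
and the subtracted term is strictly positive for $x\in(0,1)$, so it suffices to know ${\sum_{n\le y}}'\,\Lambda(n)/n\le\log y$ for all $y>1$. By Mertens' theorem the left side is $\log y-\gamma+o(1)$, and explicit forms of this estimate (Rosser--Schoenfeld type bounds on the error, which fall below $\gamma$ outside a modest initial range, plus a finite computation for small $y$) yield the inequality for every $y$; hence $P(x)<0$ for \emph{all} $x\in(0,1)$, and the multiplicative relation you isolate can never hold, at an algebraic point or otherwise. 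With this supplement your argument proves the corollary in full. The moral is that the non-vanishing is a quantitative prime-counting inequality --- it is precisely here that $\gamma>0$ enters --- and not something Baker's theorem can deliver; your instinct that the delicate point lies exactly there was sound, but the point is closable by classical estimates rather than being a true obstruction, and a complete proof should say so.
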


It seems that the  existence of the (presumably) fictitious 
algebraic element in the
above theorem cannot be ruled out under Schanuel's conjecture. 
Thus the transcendence
tools required for studying $f(x)$ in the range $x<1$ 
seem to be different
from those in the range $x>1$.
We however have the following curious theorem.

\begin{thm}\label{main}  
For an integer $d \ge 1$, suppose that $f(\pi \sqrt{d} x)$ 
has a rational zero in $(0,~1/\pi \sqrt{d})$. Then 
for the quadratic character $\chi_{-d}$ 
associated to the
imaginary quadratic field $K:= \Q(\sqrt{-d})$,
one has
$$
L'(1,\chi_{-d}) \neq 0.
$$
\end{thm}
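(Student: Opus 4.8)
The plan is to argue by contradiction: suppose $f(\pi\sqrt d\,x_0)=0$ for some rational $x_0\in(0,1/\pi\sqrt d)$ while simultaneously $L'(1,\chi_{-d})=0$, and force a contradiction with Nesterenko's theorem (\thmref{nester}). Set $y_0:=\pi\sqrt d\,x_0\in(0,1)$. First I would substitute $y_0$ into the explicit formula for the range $0<x<1$ recorded above, solve it for $f(y_0)=\sum_\rho y_0^{\rho}/\rho$, and use $f(y_0)=0$. Since $\log y_0=\log\pi+\tfrac12\log d+\log x_0$, this produces a relation of the shape
$$\log\pi+\gamma+B+\Big(y_0-\tfrac12\log\tfrac{1+y_0}{1-y_0}\Big)=0,$$
where $B$ gathers the finite prime sum $\sum_{n\le 1/y_0}{}'\Lambda(n)/n$ together with $\tfrac12\log d+\log x_0$; as $x_0$ and $d$ are rational, $B\in\L$ is a Baker period. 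The trivial-zero term $y_0-\tfrac12\log\tfrac{1+y_0}{1-y_0}=-\sum_{n\ge1}y_0^{2n+1}/(2n+1)$ is the only contribution that is genuinely transcendental in $\pi$, and tracking it is the heart of the matter.

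Next I would bring in the arithmetic of $K=\Q(\sqrt{-d})$. Factoring $\zeta_K(s)=\zeta(s)L(s,\chi_{-d})$ and comparing Laurent expansions at $s=1$ shows that the Euler--Kronecker constant of $K$ equals $\gamma+L'(1,\chi_{-d})/L(1,\chi_{-d})$. On the other hand, the Kronecker limit formula together with the Chowla--Selberg formula expresses this same constant in terms of $\gamma$, $\log 2$, $\log d$, algebraic numbers, and the logarithm of a CM value $\Pi:=\prod_{[\mathfrak a]}\sqrt{y_{\mathfrak a}}\,|\eta(\tau_{\mathfrak a})|^2$ of the Dedekind $\eta$-function at the CM points attached to the ideal classes of $K$. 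Equating the two descriptions gives
$$\frac{L'(1,\chi_{-d})}{L(1,\chi_{-d})}=\gamma+B'-\tfrac{2}{h}\log\Pi,$$
with $B'\in\L$ a Baker period and $h$ the class number. Imposing the hypothesis $L'(1,\chi_{-d})=0$ therefore writes $\gamma$ itself as $\tfrac2h\log\Pi$ plus a Baker period, crucially with $\gamma$ appearing linearly and with coefficient $1$.

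Now I would eliminate $\gamma$ between the two relations, which never requires any (unknown) statement about the nature of $\gamma$. The outcome is an identity
$$\log\pi+\tfrac{2}{h}\log\Pi+(B-B')+\Big(y_0-\tfrac12\log\tfrac{1+y_0}{1-y_0}\Big)=0.$$
By the Chowla--Selberg formula, $\Pi$ is, up to algebraic factors and a rational power of $\pi$, a fixed nonzero rational power of a real period $\omega$ of an elliptic curve with algebraic invariants and complex multiplication by an order of $K$; hence $\log\Pi$ carries $\log\omega$ with a nonzero rational coefficient. Because $x_0$ is rational, $\tau:=\tfrac{x_0}{2}\sqrt{-d}$ lies in $K$ with $\Im(\tau)\ne0$ and $e^{2\pi i\tau}=e^{-\pi\sqrt d\,x_0}=e^{-y_0}$; so exponentiating the displayed identity converts the trivial-zero term into $e^{2\pi i\tau}$ times an algebraic function of $\pi$ (from $\sqrt{1\mp y_0}$), while the Baker periods exponentiate to algebraic numbers. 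Collecting terms yields a nontrivial algebraic relation, with coefficients in $\overline{\Q}$, among $\pi$, the period $\omega$, and $e^{2\pi i\tau}$ (nontrivial since $\omega$ survives with nonzero exponent). This contradicts the algebraic independence of $\{\pi,\omega,e^{2\pi i\tau}\}$ furnished by \thmref{nester}, and the contradiction establishes $L'(1,\chi_{-d})\neq0$.

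The main obstacle is the honest treatment of the trivial-zero (archimedean) term: one must verify that, after exponentiation, it genuinely reassembles into $e^{2\pi i\tau}$ and algebraic functions of $\pi$, so that the full three-element Nesterenko set is in play. This is exactly where the rationality of $x_0$ is indispensable, since it is what forces $\tau\in K$; for a merely algebraic irrational $x_0$ the quantity $e^{\pi\sqrt d\,x_0}$ would no longer be of the form $e^{2\pi i\tau}$ with $\tau\in K$, and the argument would break. A secondary technical point, handled by \thmref{baker} for the algebraic-logarithm bookkeeping, is to fix the precise algebraic and $\pi$-power normalizations in the Kronecker limit and Chowla--Selberg formulas so that the coefficient of $\log\omega$ in the final relation is certifiably nonzero and the resulting monomial in $\pi,\omega,e^{2\pi i\tau}$ is a genuine dependence.
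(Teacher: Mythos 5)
Your proposal is correct and follows essentially the same route as the paper's proof: use Ingham's explicit formula at the rational zero to solve for $\gamma$, exponentiate (legitimately, since all the logarithms carry rational coefficients) to express $e^{2\gamma}$ as an element of $\overline{\Q}(\pi)$ times $e^{2\pi i\tau}$ with $\tau\in K$, $\Im(\tau)\neq 0$, use the hypothesis $L'(1,\chi_{-d})=0$ together with Chowla--Selberg to place $e^{\gamma}$ in $\overline{\Q}(\pi,\omega)$, and contradict Nesterenko's algebraic independence of $\{\pi,\omega,e^{2\pi i\tau}\}$. The only cosmetic differences are that you reach the Chowla--Selberg input via the Kronecker limit formula and $\eta$-values at CM points where the paper quotes the Murty--Murty $\Gamma$-product identity plus Gross's period evaluation (the same identity in different dress), and that your worry about the coefficient of $\log\omega$ being nonzero is unnecessary, since Nesterenko's theorem rules out any nontrivial algebraic relation among the triple, including ones in which $\omega$ drops out.
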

\begin{proof}
As discussed above for $0 < x< 1$,
$$
 f(x) = {\sum_{n\leq 1/x}}^{'}~~ \frac{\Lambda(n)}{n} 
+ \log x + \gamma - \frac{1}{2}\log \frac{1+x}{1-x} + x .
$$

Suppose $f(\pi \sqrt{d} r) = 0$ for some rational $r \in (0,1/\pi \sqrt{d})$. 
Write $x = \pi r \sqrt{d}$.
Then we have
$$
\gamma = -{\sum_{n\leq 1/x}}^{'}~~ \frac{\Lambda(n)}{n} 
-\log x 
+ \frac{1}{2}\log \frac{1+x}{1-x} -x.
$$
Therefore,
\begin{equation}\label{a}
e^{2\gamma} =  \alpha e^{-2\pi r \sqrt{d}} 
\end{equation}
where $\alpha \in \overline{\Q}(\pi)$.
 
Now let $\chi_{-d}$ be the quadratic character associated 
to the imaginary 
quadratic field $K = \Q(\sqrt{-d})$ such that $L'(1,\chi_{-d}) = 0$.
 
It is known (see \cite{murty-murty}, page 848) that 
(essentially by the Chowla-Selberg formula)
 $$
 \exp\left( {L'(1,\chi_{-d}) \over L(1,\chi_{-d})} - \gamma\right)
= (2D/A^2) \prod_{a=1}^D \Gamma(a/D)^{-\chi_{-d}(a)w/2h} 
 $$
where $A=\sqrt{D/\pi}$, $D$ is the absolute discriminant
of $K$, $h$ and $w$ are the class number and order
of unit group of $K$ respectively.

As observed by Gross \cite{gross1}, the number
$$ 
\prod_{a=1}^D \Gamma(a/D)^{\chi_{-d}(a)} 
$$
is, up to an algebraic factor, equal to a product of a 
power of $\pi$ and a power
of a non-zero period $\omega$ of the 
CM elliptic curve attached to
the full ring of integers of $K$.
 
Since $L'(1,\chi_{-d}) = 0$, we see from above that 
$e^\gamma \in \overline{\Q}(\pi , \omega)$. 
On the other hand, from \eqref{a}, we have 
$
e^{2\gamma} =  \alpha e^{-2\pi r \sqrt{d}} 
$
with $\alpha \in \overline{\Q}(\pi)$. This contradicts Nesterenko's 
result (Theorem \ref{nester}). 
\end{proof}

As evident, while $\log \pi$ is the mysterious number that shows 
up in the evaluation of $f(x)$ for $x>1$, it is
$\gamma$ that enters the picture for $x<1$.
We would like to obtain transcendence results
involving both $\log \pi$ and $\gamma$.
For $x >1$, replacing $x$ by $1/x$ in Ingham's formula, we obtain 
$$
L(x):={\sum_{n\leq x}}' \frac{\Lambda(n)}{n} = \log x  - \gamma
+ \sum_{\rho} \frac{x^{-\rho}}{\rho} 
- \frac{1}{2}\log \frac{x+1}{x-1} -\frac{1}{x}. 
$$
Recall that for such an $x$, we have
$$
\psi_0(x) = x - \sum_{\rho} \frac{x^{\rho}}{\rho} 
-\log 2\pi -\frac{1}{2}\log \left( 1 - \frac{1}{x^2}\right).
$$
Assuming the Riemann hypothesis so that a typical 
zero $\rho$ is of the form $1/2 + i\nu$,  
and pairing the zeros $1/2 + i\nu$ with $1/2 - i\nu$,
we obtain the following expression for $x >1$:
\begin{eqnarray*}
\sum_{\nu >0} \frac{2\cos(\nu \log x)}{\frac{1}{4} + \nu^2}
&=& \sum_{\nu} \frac{x^{i\nu} + x^{-i\nu}}{\frac{1}{2} + i\nu}
= \frac{x-\psi_0(x)}{\sqrt{x}} - \frac{\log 2\pi}{\sqrt{x}}
-\frac{1}{2\sqrt{x}}\log \left( 1- \frac{1}{x^2}\right)\\
&&
+\sqrt{x}(L(x)-\log x) +\gamma\sqrt{x} -\frac{\sqrt{x}}{2}\log
\frac{x+1}{x-1}+ \frac{1}{\sqrt{x}}  \\
\text {and hence } \phantom{mmmmm} && \\
\sum_{\nu > 0} \frac{2\cos(\nu \log x)}{\frac{1}{4} + \nu^2} 
&+&  \frac{\log 2\pi}{\sqrt{x}} - \gamma\sqrt{x} 
= \frac{x-\psi_0(x)}{\sqrt{x}} 
-\frac{1}{2\sqrt{x}}\log \left( 1- \frac{1}{x^2}\right) \\
&&
+ \sqrt{x}(L(x)-\log x)  - \frac{\sqrt{x}}{2}\log
\frac{x+1}{x-1} + \frac{1}{\sqrt{x}}.
\end{eqnarray*}
We now have the following: 
\begin{thm}  
Assume the Riemann hypothesis.  For any algebraic $x>1$, 
$$
\sum_{\nu >0} \frac{2\cos(\nu \log x)}{\frac{1}{4} + \nu^2}
+\frac{\log 2\pi}{\sqrt{x}} - \gamma \sqrt{x} $$
is an extended Baker period. 
If Schanuel's conjecture is true, then the following set 
$$
\left\{ \sum_{\nu >0} \frac{\cos(\nu \log x)}{\frac{1}{4} + \nu^2} 
: x >1, x \in \overline{\Q} \right\}
$$
has at most one algebraic number.
\end{thm}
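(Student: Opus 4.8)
The plan is to establish the two assertions separately: the first is a direct term-by-term reading of the identity displayed immediately above the theorem, and the second is a linear-elimination argument that isolates $\log\pi$ and invokes \propref{sch}.

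For the first assertion I would fix an algebraic $x>1$ and inspect the right-hand side of the preceding identity. The quantities $\psi_0(x)$ and $L(x)={\sum_{n\leq x}}'\Lambda(n)/n$ are $\Q$-linear combinations of the numbers $\log p$ with $p\le x$, hence lie in $\L$; the numbers $\log(1-x^{-2})$, $\log x$ and $\log\frac{x+1}{x-1}$ are logarithms of positive algebraic numbers, hence also lie in $\L$; and $\sqrt x,\,1/\sqrt x$ are algebraic. Since $\L$ and $\overline{\Q}$ are each stable under multiplication by the algebraic number $\sqrt x$ and under addition, the whole right-hand side is a sum of a Baker period and an algebraic number, that is, an extended Baker period. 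This proves the first claim.

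For the second assertion, write $C(x):=\sum_{\nu>0}\frac{\cos(\nu\log x)}{1/4+\nu^2}$, so that $2C(x)$ is the sum treated in the first part. Splitting $\log 2\pi=\log 2+\log\pi$ and absorbing the Baker period $(\log 2)/\sqrt x$ into the extended Baker period furnished by the first part, I record
$$
2C(x)=\tilde B(x)-\frac{\log\pi}{\sqrt x}+\gamma\sqrt x,
$$
where $\tilde B(x)$ is an extended Baker period involving neither $\log\pi$ nor $\gamma$. Suppose, for contradiction, that $C(x_1)$ and $C(x_2)$ are both algebraic for two distinct algebraic points $x_1,x_2>1$. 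Evaluating the displayed relation at $x_1$ and $x_2$ yields the linear system
$$
2C(x_i)-\tilde B(x_i)=-\frac{\log\pi}{\sqrt{x_i}}+\gamma\sqrt{x_i},\qquad i=1,2,
$$
in the unknowns $\log\pi$ and $\gamma$, whose coefficient determinant is $(x_1-x_2)/\sqrt{x_1x_2}$, a nonzero algebraic number because $x_1\neq x_2$. Solving by Cramer's rule expresses $\log\pi$ as a $\overline{\Q}$-linear combination of the numbers $2C(x_i)-\tilde B(x_i)$, each of which is an algebraic number minus an extended Baker period and hence an extended Baker period. Thus $\log\pi$ would itself be an extended Baker period, contradicting \propref{sch} under Schanuel's conjecture. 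Hence at most one algebraic $x>1$ can produce an algebraic value of $C(x)$, and in particular the set in the statement contains at most one algebraic number.

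I expect the only delicate point to be confirming that $\tilde B(x)$ genuinely contains no occurrence of $\log\pi$ or $\gamma$, so that these two transcendental unknowns are cleanly separated onto the right-hand side of the system. This is guaranteed by the structure of the identity preceding the theorem, in which every $\gamma$- and $\log 2\pi$-contribution appears on the left while the right-hand side involves only logarithms of algebraic numbers and powers of $\sqrt x$. Once this is verified, the nonvanishing of the $2\times 2$ determinant (equivalently $x_1\neq x_2$) does all the work, and Schanuel's conjecture enters only through the single input of \propref{sch}.
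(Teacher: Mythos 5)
Your proof is correct and follows exactly the route the paper intends: the first claim is read off from the displayed identity preceding the theorem (with $\psi_0(x)$ and $L(x)$ being $\Q$-linear combinations of $\log p$, and the remaining logarithms being logarithms of algebraic numbers), and the second claim follows by evaluating at two points, eliminating $\gamma$ via the nonvanishing determinant $(x_1-x_2)/\sqrt{x_1x_2}$, and concluding that $\log\pi$ would be an extended Baker period, contradicting Proposition~\ref{sch}. The paper leaves this proof implicit, and your Cramer's-rule elimination supplies precisely the missing details; your worry about $\tilde B(x)$ ``containing'' $\log\pi$ or $\gamma$ is in fact a non-issue, since all that the argument requires is that each number $2C(x_i)-\tilde B(x_i)$ is an extended Baker period, which your first part already establishes.
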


We now derive a related result without assuming the 
Riemann hypothesis.  
To this end, we observe that we can write
$$
\sum_{\rho} \frac{x^{-\rho}}{\rho}=  \sum_{\rho}{x^{-(1-\rho)}
\over 1-\rho} =  \sum_{\rho} {x^{\rho-1} \over 1-\rho}, 
$$
by virtue of the functional equation. Thus
$$
L(x) = \log x - \gamma - \sum_{\rho} {x^{\rho -1} \over \rho}
+ \left( \sum_{\rho}{x^{\rho-1} \over \rho} + 
\sum_{\rho} {x^{\rho-1} \over 1-\rho} \right) 
+ {1\over 2} \log {x+1 \over x-1} - \frac{1}{x}.
$$

The sum in brackets can be written as 
$$
\sum_{\rho} {x^{\rho-1} \over \rho(1-\rho)} 
$$
which is an absolutely convergent series and is thus equal to
$$
\sum_{\rho} {x^{\rho-1} \over \rho(1-\rho)} =
L(x) -\log x + \gamma + \sum_{\rho} {x^{\rho -1} \over \rho}
- {1\over 2} \log {x+1 \over x-1} + \frac{1}{x}.
$$
The sum
$$
\sum_{\rho} {x^{\rho -1} \over \rho} 
$$
is equal to
$$
{x -\psi_0(x) \over x} - {\log 2\pi \over x}
 - {1 \over 2x} \log \left( 1 - {1 \over x^2}\right)
$$ 
and hence
\begin{eqnarray*}
\sum_{\rho} {x^\rho \over \rho(1- \rho)} -\gamma x + \log 2\pi 
&=&
1 + x(L(x)-\log x ) + x - \psi_0(x)  \\
\phantom{jajamsmsmsmsmsm} 
&& - {x \over 2} \log {x+1 \over x-1}
-{1 \over 2} \log \left( 1 - {1 \over x^2}\right).
\end{eqnarray*}
As the right hand side is an extended Baker period for algebraic
$x$, 
this proves:

\begin{thm}  
For $x >1$, 
$$
S(x): =\sum_{\rho} {x^\rho \over \rho(1- \rho)}
-\gamma x + \log 2\pi
$$
is an extended Baker period.
In particular, assuming Schanuel's conjecture, the set 
$$
\left\{\sum_{\rho} {x^\rho \over \rho(1 - \rho)} : x >1, 
x \in \overline{\Q} \right\}$$
contains at most one algebraic number.
\end{thm}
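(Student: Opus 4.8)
The plan is to read the conclusion straight off the identity established in the lines immediately preceding the statement, which already writes $S(x)$ as an explicit combination of elementary pieces. First I would confirm that $S(x)$ is an extended Baker period for every algebraic $x>1$ by inspecting that right-hand side term by term, using that the Baker periods and the extended Baker periods are each $\overline{\Q}$-vector spaces. The constant $1$ is an extended Baker period by definition. Since $L(x)={\sum_{n\le x}}'\Lambda(n)/n$ is a $\Q$-linear combination of the logarithms $\log p$ with $p\le x$, it is a Baker period, and as $x$ is algebraic so is $\log x$; hence $x\bigl(L(x)-\log x\bigr)$ is a Baker period. The function $\psi_0(x)$ is a Baker period (as already noted), and $x=x\cdot 1$ is an extended Baker period, so $x-\psi_0(x)$ is an extended Baker period. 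Finally, for $x>1$ the arguments $\tfrac{x+1}{x-1}$ and $1-\tfrac{1}{x^2}$ are positive nonzero algebraic numbers, so $\log\tfrac{x+1}{x-1}$ and $\log(1-\tfrac{1}{x^2})$ are Baker periods, and multiplying by the algebraic factors $-x/2$ and $-1/2$ keeps them so. Adding the pieces shows that the whole right-hand side, and therefore $S(x)$, is an extended Baker period.

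For the conditional statement I would argue by contradiction, assuming Schanuel's conjecture. Suppose the set contained two distinct algebraic numbers; they arise from distinct algebraic points $a,b\in(1,\infty)$, so that $\Sigma_a:=\sum_\rho a^\rho/(\rho(1-\rho))$ and $\Sigma_b:=\sum_\rho b^\rho/(\rho(1-\rho))$ are both algebraic. Writing $S(x)=\Sigma_x-\gamma x+\log 2\pi$ and recalling from the first part that $S(a)$ and $S(b)$ are extended Baker periods, the differences
$$E_a:=S(a)-\Sigma_a=\log 2\pi-\gamma a,\qquad E_b:=S(b)-\Sigma_b=\log 2\pi-\gamma b$$
are extended Baker periods, each being the difference of an extended Baker period and an algebraic number.

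The decisive step is then to eliminate the unknown constant $\gamma$ by a $\overline{\Q}$-linear combination chosen so that its coefficient cancels. Concretely,
$$bE_a-aE_b=b(\log 2\pi-\gamma a)-a(\log 2\pi-\gamma b)=(b-a)\log 2\pi,$$
and since $a,b$ are algebraic and $E_a,E_b$ are extended Baker periods, the left-hand side is an extended Baker period; as $b-a\ne 0$ is algebraic it follows that $\log 2\pi$, and hence $\log\pi=\log 2\pi-\log 2$, is an extended Baker period. This contradicts \propref{sch}, which under Schanuel's conjecture asserts that $\log\pi$ is not an extended Baker period, completing the proof.

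I expect the term-by-term check of Part 1 to be entirely routine, the only point of care being that $x>1$ is what keeps every logarithm's argument positive and nonzero. The conceptually substantive point is the $\gamma$-elimination in Part 2: in contrast to the earlier $x>1$ theorems, where $\log\pi$ was the sole transcendental obstruction so that a single algebraic value already contradicted \propref{sch}, the extra term $-\gamma x$ means one algebraic value yields only the mixed quantity $\log 2\pi-\gamma a$, on which Schanuel's conjecture gives no direct information. Two values are precisely what is needed to cancel $\gamma$ and isolate $\log 2\pi$; this is why the conclusion reads \emph{at most one} algebraic number rather than none.
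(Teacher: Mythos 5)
Your proposal is correct and follows essentially the paper's own route: the paper's proof consists precisely of the displayed identity you cite, with the first assertion read off term by term exactly as you do, the point being that for algebraic $x>1$ every logarithm has a nonzero algebraic argument. The ``at most one'' clause is likewise the intended two-point argument: two algebraic values force $\log 2\pi-\gamma a$ and $\log 2\pi-\gamma b$ to be extended Baker periods, and eliminating $\gamma$ isolates $\log 2\pi$, hence $\log\pi$, contradicting \propref{sch} under Schanuel's conjecture.
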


We remark that an expression similar to ours in the above
theorem was
also obtained by Ramar\'e \cite{ramare} (however, sign in the sum
over the zeros in his Lemma 2.2 should be negative).

\section{Sums of general type involving the Riemann zeta function}

We now consider more general sums of the form
$$
f(x) := \sum_{\rho} \frac{A(\rho)}{B(\rho)} x^{\rho},
$$
where $A(t) \in \overline{\Q}[t]$ while $B(t) \in \Q[t]$ 
be polynomials and $x \in (0, \infty)$. We
assume that $B(t)$ has simple rational
roots with degree greater than that of $A(t)$.
As before, the sum is defined following the convention 
of section 3. 

We shall need the following elementary lemma whose proof 
we omit (see p. 137 of \cite{FN}).  
\begin{lem}\label{nest}
 Let for $|z| < 1$, let
$$
f_u(z) = \sum_{n=1}^\infty \frac{z^n}{n+u}. 
$$
If $u=p/q$ is a rational number, then
$$
f_u(z) = -z^{p/q} 
\sum_{m=0}^{q-1} \zeta_q^{-pm} \log (1 - \zeta_q^m z^{1/q}), 
$$
where $\zeta_q = e^{2\pi i/q}$.
\end{lem}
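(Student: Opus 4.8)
The plan is to reduce the identity to the elementary Taylor expansion of the logarithm combined with a roots-of-unity filter. First I would fix a branch of $z^{1/q}$ on the slit unit disk and write $w = z^{1/q}$, so that $z^{p/q} = w^p$ and $\zeta_q^m z^{1/q} = \zeta_q^m w$ with $|\zeta_q^m w| = |w| < 1$. For each $m$ the principal logarithm then admits the absolutely convergent expansion $\log(1 - \zeta_q^m w) = -\sum_{k=1}^\infty \zeta_q^{mk} w^k / k$, valid throughout the disk.

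Next I would substitute this into the finite sum over $m$ and interchange the (finite) sum over $m$ with the sum over $k$; absolute convergence for $|z| < 1$ legitimises this interchange. This produces
\[
-\sum_{m=0}^{q-1} \zeta_q^{-pm} \log(1 - \zeta_q^m w) = \sum_{k=1}^\infty \frac{w^k}{k}\left(\sum_{m=0}^{q-1}\zeta_q^{m(k-p)}\right).
\]
The inner sum is the standard orthogonality relation for $q$-th roots of unity: it equals $q$ when $k \equiv p \pmod q$ and $0$ otherwise. Hence only the arithmetic progression $k \equiv p \pmod q$ survives. After multiplying by the prefactor and reindexing $k = p + jq$, one recognises the surviving series term by term as the series defining $f_u(z)$, since $\tfrac{q}{p+jq} = \tfrac{1}{j+u}$ and the remaining power of $w$ collapses to an integer power $z^j$.

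The step I expect to be the genuine obstacle is not the analysis but the bookkeeping at the very end: one must pin down the branch of $z^{1/q}$ and the matching power $z^{\pm p/q}$ consistently, and then verify that the reindexing $k = p + jq$ begins at exactly the right index, so that the surviving series is precisely $\sum_{n\ge 1} z^n/(n+u)$ rather than that sum shifted by its $n=0$ term or rescaled by an extra power of $z$. This is where the exact form of the prefactor, and in particular the sign of the exponent in $z^{\pm p/q}$, is forced; it is the only point demanding care beyond routine manipulation. Once the branch and the index range are aligned, equality of the two absolutely convergent power series on $|z|<1$ follows coefficientwise, and then on the whole slit disk by analytic continuation.
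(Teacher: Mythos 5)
Your strategy --- expand each $\log(1-\zeta_q^m w)$ with $w=z^{1/q}$, interchange the finite sum over $m$ with the sum over $k$, and apply orthogonality of $q$-th roots of unity --- is the standard route, and is essentially the argument behind the reference the paper cites in place of a proof (p.~137 of \cite{FN}). But the step you defer as ``bookkeeping at the very end'' is exactly where your write-up breaks, and in fact where the lemma as printed breaks. Carrying your own computation to completion (say with $1\le p\le q$) gives
$$
-\sum_{m=0}^{q-1}\zeta_q^{-pm}\log\bigl(1-\zeta_q^m w\bigr)
\;=\;
q\sum_{\substack{k\ge 1\\ k\equiv p \pmod q}}\frac{w^k}{k}
\;=\;
w^{p}\sum_{j\ge 0}\frac{z^{j}}{j+u},
$$
so the identity that actually comes out is
$$
f_u(z)\;=\;-z^{-p/q}\sum_{m=0}^{q-1}\zeta_q^{-pm}\log\bigl(1-\zeta_q^m z^{1/q}\bigr)\;-\;\frac{1}{u},
$$
with prefactor $z^{-p/q}$ (not $z^{p/q}$), an extra term $-1/u$ coming from the $j=0$ (i.e.\ $k=p$) term of the filtered series, and further correction terms if $p\notin[1,q]$. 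With the printed prefactor $z^{p/q}$ the surviving exponents are $2p/q+j$, which are not even integers, so the resulting series cannot be $f_u(z)$; numerically, at $u=1/2$, $z=1/4$ the printed right-hand side equals $\tfrac12\log 3\approx 0.549$, whereas $f_{1/2}(1/4)=2\log 3-2\approx 0.197$. Hence your concluding claim that ``one recognises the surviving series term by term as the series defining $f_u(z)$'' is false for the statement as given: you correctly identified the sign of the exponent and the possible $n=0$ shift as the crux, but then asserted the identity without resolving it, and resolving it refutes the printed form.

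What you have, then, is a correct proof scheme for the corrected lemma rather than a proof of the stated one. The defect originates in the source: the lemma as printed in the paper is off by the sign of the exponent and by the $n=0$ term (equivalently, it is the identity for $\sum_{n\ge 0}z^n/(n+u)$ with prefactor $z^{-p/q}$). This does not damage the paper, since the lemma is invoked only to conclude that sums such as $\sum_{n\ge1} x^{-(n+\mu_j)/\lambda_j}\big/\bigl(\tfrac{n+\mu_j}{\lambda_j}+\alpha\bigr)$ are extended Baker periods for algebraic $x$ and rational parameters, and the corrected identity serves that purpose verbatim: the rational summand $-1/u$ and the algebraic factor $z^{-p/q}$ are harmless there. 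A complete write-up along your lines should state the corrected identity, assume $u>0$ with $1\le p\le q$ (reducing the general case by splitting off finitely many terms, with $u$ avoiding the nonpositive integers so that everything is defined), and justify the branch point you raised as follows: for $|w|<1$ one has $\Re\bigl(1-\zeta_q^m w\bigr)>0$, so the principal logarithm agrees with the Mercator series $-\sum_{k\ge1}(\zeta_q^m w)^k/k$, which legitimises both the expansion and the interchange of summations by absolute convergence.
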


We first consider the case when $x>1$. For this,
we shall further assume that $B(t)$ has simple rational
 roots lying in $\Q \setminus \{ 1, -2,-4, -6 \cdots\}$.
 We have the following theorem.
 \begin{thm}
 Let $ A(t)$ and $B(t)$ be as described above and let  
 $\alpha_{1}, \cdots, \alpha_{d}$ be the  roots of $B(t)$.
For an algebraic number $x > 1$, 
$$
g(x):= \sum_{\rho} \frac{A(\rho)}{B(\rho)}x^\rho
 + \sum_{i=1}^d \frac{A(\alpha_i)}{B'(\alpha_i)} 
 \frac{{\zeta}'}{\zeta}
 (\alpha_i) x^{\alpha_i}
$$
is an extended Baker period. Further
for $\lambda_{i}:= \frac{A(\alpha_i)}{B'(\alpha_i)}$,
\begin{itemize}
\item
 if $\sum_{i=1}^{d} 
\frac{\lambda_{i}}{1-\alpha_{i}} \neq 0$, then
$g(x)$ has at most one algebraic zero 
in $(1,\infty)$;
\item
if $\sum_{i=1}^{d} 
\frac{\lambda_{i}}{1-\alpha_{i}} = 0$ 
and $g(x) \neq 0$ for some algebraic
$x> 1$, then at least one of the two
numbers
$$
\sum_{\rho} \frac{A(\rho)}{B(\rho)}x^\rho,
\phantom{m}
\sum_{i=1}^d \lambda_i
 \frac{{\zeta}'}{\zeta}
 (\alpha_i) x^{\alpha_i}
 $$
 is transcendental. 
 \end{itemize}
\end{thm}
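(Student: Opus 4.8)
The plan is to produce an explicit formula for $g(x)$ by contour integration, exactly parallel to the von Mangoldt formula used in Section~3 but with a kernel adapted to the rational function $A/B$. For algebraic $x>1$ and a fixed $c>1$ exceeding all the $\alpha_i$, I would analyse
$$
I(x) := \frac{1}{2\pi i}\int_{c-i\infty}^{c+i\infty} \frac{A(s)}{B(s)}\,\frac{\zeta'}{\zeta}(s)\,x^{s}\,ds
$$
by shifting the line of integration to the left and collecting residues. The logarithmic derivative $\frac{\zeta'}{\zeta}(s)$ has a pole of residue $1$ (counting multiplicity) at each non-trivial zero $\rho$, contributing $\frac{A(\rho)}{B(\rho)}x^\rho$; a simple pole of residue $1$ at each trivial zero $-2n$; and a simple pole of residue $-1$ at $s=1$. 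The factor $A/B$ contributes simple poles at the roots $\alpha_i$ of $B$, with residue $\lambda_i\frac{\zeta'}{\zeta}(\alpha_i)x^{\alpha_i}$, and it is exactly this family that accounts for the second sum in the definition of $g(x)$; because the $\alpha_i$ are distinct and avoid $1$ and the trivial zeros, all of these poles are simple and disjoint.

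Assembling the residues yields
$$
I(x) = \sum_\rho \frac{A(\rho)}{B(\rho)}x^\rho - c\,x + T(x) + \sum_{i=1}^{d}\lambda_i\frac{\zeta'}{\zeta}(\alpha_i)x^{\alpha_i},
$$
where $T(x)=\sum_{n\ge 1}\frac{A(-2n)}{B(-2n)}x^{-2n}$ collects the trivial zeros and $c$ is the datum at $s=1$. The structural point is that the partial fraction expansion $\frac{A(s)}{B(s)}=\sum_{i}\frac{\lambda_i}{s-\alpha_i}$, legitimate since the roots are simple and $\deg B>\deg A$, identifies $c=\frac{A(1)}{B(1)}=\sum_{i=1}^{d}\frac{\lambda_i}{1-\alpha_i}$; this is precisely the quantity whose vanishing separates the two cases. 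Rearranging gives
$$
g(x) = I(x) + c\,x - T(x).
$$

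It then remains to show $I(x),T(x)\in\L$. On the line $\Re(s)=c>1$ I expand $\frac{\zeta'}{\zeta}(s)=-\sum_n\Lambda(n)n^{-s}$ and evaluate the resulting Mellin--Perron integrals termwise: the one attached to $n$ equals $\sum_i\lambda_i(x/n)^{\alpha_i}$ when $n<x$ and vanishes when $n>x$, so $I(x)=-\sum_i\lambda_i x^{\alpha_i}\sum_{n<x}\Lambda(n)n^{-\alpha_i}$, a finite sum in which each summand is an algebraic multiple of some $\log p$; hence $I(x)\in\L$. For $T(x)$ the same expansion together with $z=x^{-2}$ reduces it to $-\tfrac12\sum_i\lambda_i f_{\alpha_i/2}(x^{-2})$ in the notation of \lemref{nest}, and since each $\alpha_i/2$ is rational that lemma rewrites $f_{\alpha_i/2}(x^{-2})$ as a $\overline{\Q}$-combination of logarithms of algebraic numbers, so $T(x)\in\L$. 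As $c\,x$ is algebraic, this proves that $g(x)$ is an extended Baker period. I expect the genuine obstacle to sit here: justifying the contour shift and the termwise Perron evaluation needs care with the convergence and the horizontal and tail estimates (one may want $\deg B\ge\deg A+2$, or a careful conditional-convergence argument), and the trivial-zero sum collapses to logarithms only because the $\alpha_i$ are rational, which is exactly where \lemref{nest} is indispensable.

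The two alternatives now follow from Baker's theorem. If $c=\sum_i\lambda_i/(1-\alpha_i)\neq 0$, then for distinct algebraic $\alpha,\beta\in(1,\infty)$ the difference $g(\alpha)-g(\beta)$ is a Baker period plus $c(\alpha-\beta)$; should $g(\alpha)=g(\beta)$, this Baker period would equal the nonzero algebraic number $-c(\alpha-\beta)$, which is impossible since every nonzero Baker period is transcendental. Thus $g$ is injective on the algebraic numbers greater than $1$ and so has at most one algebraic zero in $(1,\infty)$. If instead $c=0$, the algebraic term disappears and $g(x)=I(x)-T(x)$ is itself a Baker period; if $g(x)\neq 0$ for some algebraic $x>1$, it is then a nonzero, hence transcendental, Baker period, so the two summands $\sum_\rho\frac{A(\rho)}{B(\rho)}x^\rho$ and $\sum_{i=1}^{d}\lambda_i\frac{\zeta'}{\zeta}(\alpha_i)x^{\alpha_i}$ cannot both be algebraic, i.e.\ at least one of them is transcendental.
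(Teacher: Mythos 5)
Your proposal is correct and follows essentially the same route as the paper: your single contour integral with kernel $\frac{A(s)}{B(s)}\frac{\zeta'}{\zeta}(s)x^s$ just re-derives, all at once, the $\alpha$-twisted explicit formula for $\psi_0(x,\alpha)$ that the paper quotes and then sums via the same partial-fraction decomposition, with the residue at $s=1$ giving exactly the paper's term $x\sum_i \lambda_i/(1-\alpha_i)$, the trivial zeros handled by the same Lemma~\ref{nest}, and the same Baker-theorem endgame (injectivity on algebraic points when $\sum_i \lambda_i/(1-\alpha_i)\neq 0$, and a nonzero Baker period otherwise). The analytic caveats you flag (contour justification, the $\tfrac12\Lambda(x)$ correction when $x$ is a prime power) are treated at the same level of detail in the paper, which simply cites the explicit formula, so they are not gaps relative to the paper's own argument.
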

\begin{proof}
For any $x> 1, \alpha \in \R \setminus \{ 1, -2,-4, -6 \cdots\}$, we 
have the following expression  
 $$
 \psi_0(x, \alpha)
 =  \frac{x}{1-\alpha} - x^{\alpha} \frac{{\zeta}'}{\zeta}
 (\alpha) - \sum_{\rho}\frac{x^\rho}{\rho - \alpha} 
 +\sum_{n=1}^{\infty} \frac{x^{-2n}}{2n+ \alpha},
$$
where
$$
\psi_0(x, \alpha) := 
\begin{cases}
 x^{\alpha} {\sum_{n \leq x}}~~ \frac{\Lambda(n)}{n^{\alpha}} 
 & \text{if $x$ is not a prime power;}\\ \\
x^{\alpha} {\sum_{n < x}}~~ \frac{\Lambda(n)}{n^{\alpha}}  + 
\frac{1}{2}\Lambda(x) & \text{otherwise.}
\end{cases}
$$
This follows by modifying the explicit formula suitably.
We now re-write this as 
\begin{equation}\label{b}
 \sum_{\rho}\frac{x^\rho}{\rho - \alpha} 
 +  x^{\alpha} \frac{{\zeta}'}{\zeta}
 (\alpha) = \frac{x}{1-\alpha} - \psi_0(x, \alpha)
 +\sum_{n=1}^{\infty} \frac{x^{-2n}}{2n+ \alpha}.
\end{equation}
Note that by Lemma \ref{nest}, when $\alpha$
is a rational number and $x$ is algebraic, 
the right hand side of \eqref{b} is 
an extended Baker period. Now by using 
partial fractions, we can write
$$
\frac{A(t)}{B(t)} = 
\sum_{i=1}^{d} \frac{\lambda_{i}}{t-\alpha_{i}}
$$
with $\lambda_{i}:= \frac{A(\alpha_i)}{B'(\alpha_i)}$. 
Thus the function 
$$
\sum_{\rho} \frac{A(\rho)}{B(\rho)}x^\rho ~+~ 
 \sum_{i=1}^d \frac{A(\alpha_i)}{B'(\alpha_i)} 
 \frac{{\zeta}'}{\zeta} (\alpha_i) x^{\alpha_i}
 $$ 
 is equal to
 $$
 x \sum_{i=1}^{d} \frac{\lambda_{i}}{1-\alpha_{i}} 
 - \sum_{i=1}^{d} \lambda_{i} \psi_0(x , \alpha_i) +
 \sum_{i=1}^{d} \lambda_{i} \sum_{n=1}^{\infty}
  \frac{x^{-2n}}{2n+ \alpha_{i}}.
$$
which is an extended Baker period if $x$ is algebraic.
The second part of the theorem is again a
consequence of Baker's theorem.
\end{proof}

\bigskip
 
Finally, when $x \in (0,1)$ and  $\alpha \in \R 
\setminus \{ 0, 1,3,5,7 \cdots\}$, we have 
$$
T(x, \alpha)  
 =\sum_{\rho}\frac{x^\rho}{\rho - \alpha} + \frac{1}{\alpha}  -
 x^{\alpha} \frac{{\zeta}'}{\zeta}
 (1- \alpha) + \sum_{n=1}^{\infty} \frac{x^{2n+1}}{2n+1-\alpha},
 $$
 where
 $$
 T(x, \alpha) := 
\begin{cases}
x^{\alpha} {\sum_{n \leq 1/x}}~~ \frac{\Lambda(n)}{n^{1-\alpha}} 
& \text{if $1/x$ is not a prime power;}\\ \\
x^{\alpha} {\sum_{n < 1/x}}~~ \frac{\Lambda(n)}{n^{1-\alpha}}  + 
\frac{x}{2}\Lambda(1/x) & \text{otherwise.}
\end{cases}
$$
Hence we have 
$$
\sum_{\rho}\frac{x^\rho}{\rho - \alpha}  -x^{\alpha} \frac{{\zeta}'}{\zeta}
(1- \alpha) = T(x, \alpha) -  
\frac{1}{\alpha} -  \sum_{n=1}^{\infty} \frac{x^{2n+1}}{2n+1-\alpha},
$$
 where the right hand side is an extended Baker period 
 when $x$ is algebraic and $\alpha$ is rational. Thus we have
  the following theorem for $x <1$.
 \begin{thm}
 Let $ A(t)$ and $B(t)$ be as before and let  
 $\alpha_{1}, \cdots, \alpha_{d}$ be the  roots of $B(t)$
 all of which lie in  $\Q \setminus \{ 0, 1,3,5,7 \cdots\}$.
For an algebraic $x \in (0,1)$, the number
$$
h(x) := 
\sum_{\rho} \frac{A(\rho)}{B(\rho)}x^\rho
 - \sum_{i=1}^d \frac{A(\alpha_i)}{B'(\alpha_i)} \frac{{\zeta}'}{\zeta}
 (1-\alpha_i)
 x^{\alpha_i}
$$
is an extended Baker period. Further, the set
$$
\{ h(x) ~|~ x \in (0,1)\cap \overline{\Q} \}
$$
can have at most one algebraic number.
\end{thm}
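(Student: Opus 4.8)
The plan is to reduce the statement to the single–pole identity for $T(x,\alpha)$ displayed just above the theorem, by decomposing $A/B$ into partial fractions, and then to read off the two assertions from \lemref{nest} and Baker's theorem (\thmref{baker}). Since $B(t)$ has simple roots $\alpha_1,\dots,\alpha_d$ and $\deg B > \deg A$, I would first write
$$
\frac{A(t)}{B(t)} = \sum_{i=1}^d \frac{\lambda_i}{t-\alpha_i}, \qquad \lambda_i := \frac{A(\alpha_i)}{B'(\alpha_i)},
$$
exactly as in the preceding ($x>1$) theorem. Substituting $t=\rho$ and summing over the non-trivial zeros gives $\sum_\rho \frac{A(\rho)}{B(\rho)} x^\rho = \sum_{i=1}^d \lambda_i \sum_\rho \frac{x^\rho}{\rho-\alpha_i}$, while the subtracted term in $h(x)$ is precisely $\sum_i \lambda_i x^{\alpha_i}\frac{\zeta'}{\zeta}(1-\alpha_i)$. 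Hence $h(x)$ is the $\lambda_i$-weighted sum of the left-hand sides of the $T$-identity.

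For the first assertion I would invoke that identity termwise, obtaining
$$
h(x) = \sum_{i=1}^d \lambda_i\left(T(x,\alpha_i) - \frac{1}{\alpha_i} - \sum_{n=1}^\infty \frac{x^{2n+1}}{2n+1-\alpha_i}\right).
$$
Each summand is an extended Baker period when $x$ is algebraic and $\alpha_i$ is rational: $T(x,\alpha_i)$ is a finite $\overline{\Q}$-combination of the numbers $\log p$ (coming from $\Lambda$), hence a Baker period; $1/\alpha_i$ is algebraic; and the tail $\sum_{n\ge 1} x^{2n+1}/(2n+1-\alpha_i) = \tfrac{x}{2}\, f_{u_i}(x^2)$ with $u_i=(1-\alpha_i)/2$ is, by \lemref{nest}, again a $\overline{\Q}$-combination of logarithms of algebraic numbers. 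Here $|x^2|<1$ since $x\in(0,1)$, and the excluded set $\{0,1,3,5,7,\dots\}$ is exactly what keeps $1/\alpha_i$, $\frac{\zeta'}{\zeta}(1-\alpha_i)$ and every denominator $2n+1-\alpha_i$ finite and non-zero (so that $u_i$ is never a negative integer). Summing, $h(x)$ lies in the $\overline{\Q}$-span of $1$ and logarithms of algebraic numbers, i.e. is an extended Baker period.

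The crucial observation for the second assertion — and the one point where this $x<1$ situation is genuinely cleaner than the $x>1$ one — is that the \emph{only} non-logarithmic contribution in the display above is the constant $a := -\sum_{i=1}^d \lambda_i/\alpha_i$, which is algebraic and, unlike the $x\sum_i \lambda_i/(1-\alpha_i)$ appearing for $x>1$, does \emph{not} depend on $x$. Thus $h(x) = a + L(x)$ with $L(x)$ a Baker period. If the set contained two distinct algebraic values $h(\alpha)\ne h(\beta)$, their difference $h(\alpha)-h(\beta)=L(\alpha)-L(\beta)$ would be a non-zero algebraic number that is simultaneously a Baker period; since by \thmref{baker} every non-zero Baker period is transcendental, this is impossible, so at most one value $h(x)$ can be algebraic. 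I expect no serious obstacle beyond bookkeeping: the substantive inputs (the explicit formula for $T(x,\alpha)$ and the closed form in \lemref{nest}) are already in place, and the only care needed is to confirm that the poles avoided by the hypothesis on the $\alpha_i$ keep every term well defined, and to record the $x$-independence of $a$, which removes the case distinction that was forced in the $x>1$ theorem.
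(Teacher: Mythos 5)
Your proposal is correct and follows essentially the same route as the paper's own proof: partial fractions give $h(x)=\sum_{i=1}^{d}\lambda_{i}T(x,\alpha_i)-\sum_{i=1}^{d}\lambda_{i}/\alpha_{i}-\sum_{i=1}^{d}\lambda_{i}\sum_{n=1}^{\infty}x^{2n+1}/(2n+1-\alpha_{i})$, which is an extended Baker period via \lemref{nest}, and the second assertion follows from Baker's theorem applied to $h(x)-h(y)$. Your explicit remark that the only non-logarithmic term $-\sum_{i}\lambda_{i}/\alpha_{i}$ is independent of $x$ is exactly the (unstated) reason the paper can assert that $h(x)-h(y)$ is a Baker period for algebraic $x,y$, so you have merely made the paper's one-line conclusion explicit.
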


\begin{proof}
As before, using partial fractions, we can deduce that
$$
h(x) =  \sum_{i=1}^{d} \lambda_{i} T(x , \alpha_i) 
- \sum_{i=1}^{d}  \frac{\lambda_{i}}{\alpha_{i}}
-  \sum_{i=1}^{d} \lambda_{i} \sum_{n=1}^{\infty}
  \frac{x^{2n+1}}{2n+ 1 - \alpha_{i}},
$$
where $\lambda_{i}:= \frac{A(\alpha_i)}{B'(\alpha_i)}$.
When $x$ is algebraic, $h(x)$ is an extended Baker period.
Finally the second part of the theorem follows
by noting that for $x, y$ algebraic, 
$h(x) - h(y)$ is a Baker period.
\end{proof}
 
 \section{The Selberg class}

Selberg \cite{selberg} defined a large class $\mathscr{S}$ of Dirichlet series
admitting analytic continuation and functional equation.
It is likely that this class includes the universe of automorphic $L$-functions, 
though this has not yet been proven.  The class $\mathscr{S}$ is
defined as follows.

\begin{enumerate}
\item
Each $F\in {\S}$ is a Dirichlet series 
$$F(s) = \sum_{n=1}^\infty a_F(n)n^{-s}, $$
absolutely convergent for $\Re(s) >1$.
\item  There exists an integer $m \geq 0$ such that
$(s-1)^m F(s)$ is an entire function of finite order.
Let $m_F$ denote the least value of such $m$.
\item  For each $F \in {\S}$, 
there exist  numbers $Q_F>0$ and $r \geq 0$, 
and numbers $\lambda_j >0$ and $\mu_j$ with $\Re(\mu_j) 
\geq 0$,  such that
$$\xi_F(s) := Q_F^s \prod_{j=1}^r \Gamma(\lambda_js + \mu_j) F(s)$$
satisfies the functional equation
$$\xi_F(s) = w~\overline{\xi_F}(1-s), $$
with $w$ a complex number of absolute value 1.
Here $\overline{\xi_F}(s) = \overline{\xi_F(\overline{s})}$ and an 
empty product equals 1.
\item  The Dirichlet coefficients $a_F(n)$ satisfy $a_F(n) \ll n^\epsilon$
for every $\epsilon >0$;
\item $\log F(s)$ can be written as a Dirichlet series
$$\sum_{n=1}^\infty b_F(n)n^{-s}, $$
where $b_F(n)$ is zero unless $n$ is a prime power and
$b_F(n) \ll n^\theta$ for some $\theta <1/2$.
\end{enumerate}

There are several celebrated conjectures related to this class
and we refer the reader to \cite{selberg} and \cite{murty}
for further details.  Because of the Legendre
duplication formula for the $\Gamma$-function, it is easy to
see that  the functional equation is not unique for an arbitrary
element $F$ in $\S$.  However, the invariants
$$
d_F = 2\sum_{j=1}^r \lambda_j, \quad q_F = (2\pi)^{d_F}Q_F^2\prod_{j=1}^r
\lambda_j^{2\lambda_j}, \quad \theta_F = 2\Im\left( \sum_{j=1}^r (\mu_j - 1/2)
\right), 
$$
are well-defined and called, the {\it degree}, the {\it conductor}
and {\it shift}, respectively.
One conjectures that $d_F$ and $q_F$ are positive integers.
Recently, some impressive work \cite{k-p} has appeared that shows that
$0< d_F < 1$ and $1 < d_F < 2$ are impossible.

We now derive a general formula for an element in the Selberg class.
It is convenient to write
$$
-\frac{F'}{F}(s) = \sum_{n=1}^\infty \Lambda_F(n)n^{-s}, \quad
\Lambda_F(n) = b_F(n)\log n. 
$$
For $x > 1$, let us introduce the notation
$$
\psi_0(x,F, \alpha) := 
\begin{cases}
 x^{\alpha} {\sum_{n \leq x}}~~ \frac{\Lambda_F(n)}{n^{\alpha}} 
 & \text{if $x$ is not a prime power;}\\ \\
x^{\alpha} {\sum_{n < x}}~~ \frac{\Lambda_F(n)}{n^{\alpha}}  + 
\frac{1}{2}\Lambda_F(x) & \text{otherwise.}
\end{cases}
$$

Let $\alpha$ be a complex number not equal
to any of the poles and zeros of $F(s)$.
Again the explicit formula for $F$  (see formula (12) of 
\cite{murty-perelli}, for instance)
yields that
$$
\psi_0(x,F, \alpha)  = -x^{\alpha}\frac{F'}{F}(\alpha)
+ \frac{m_F x}{1-\alpha} - \sum_{\rho} \frac{x^{\rho }}{\rho-\alpha}
+ \sum_{j=1}^r \sum_{n=0}^\infty \frac{x^{-((n+\mu_j)/\lambda_j)}}{\frac{n+\mu_j}{
\lambda_j} + \alpha} - \frac{m_F }{\alpha}, 
$$
where $\rho$ runs over the non-trivial zeros of $F$ 
in the sector $0 \leq \Re(s) \leq 1$.
Recalling the following series introduced earlier
$$
f_u(z) = \sum_{n=1}^\infty \frac{z^n}{n+u}, 
$$
we have:
$$ 
\sum_{n=1}^\infty \frac{x^{-((n+\mu_j)/\lambda_j)}}{\frac{n+\mu_j}{
\lambda_j} + \alpha} = x^{-\mu_j/\lambda_j}\lambda_j f_{\mu_j 
+ \alpha\lambda_j}(x^{-1/\lambda_j}), 
$$
and hence
$$
x^{\alpha}\frac{F'(\alpha)}{F(\alpha)}
+  \sum_{\rho} \frac{x^{\rho }}{\rho-\alpha}
$$
is equal to 
$$
=  \frac{m_F x}{1-\alpha}
-\psi_0(x,F, \alpha)  
+ \sum_{j=1}^r x^{-\mu_j/\lambda_j}\lambda_j f_{\mu_j 
+ \alpha\lambda_j}(x^{-1/\lambda_j}) 
+ \sum_{j=1}^r \frac{x^{-(\mu_j/\lambda_j)}}{\frac{\mu_j}{
\lambda_j} + \alpha}
- \frac{m_F }{\alpha}. 
$$
Again let $A(t)\in \overline{\Q}[t]$ and $B(t)\in {\Q}[t]$ be polynomials
such that $B(t)$ has simple rational roots  not equal
to any of the poles and zeros of $F(s)$ and degree of $B(t)$
is strictly greater than the degree of $A(t)$. Then as before,
$$
\frac{A(t)}{B(t)} = 
\sum_{i=1}^{d} \frac{\beta_{i}}{t-\alpha_{i}}
$$
with $\beta_i:= \frac{A(\alpha_i)}{B'(\alpha_i)}$,
and hence we have 
$$
\sum_{\rho} \frac{A(\rho)}{B(\rho)}x^{\rho} ~+~ 
 \sum_{i=1}^d \frac{A(\alpha_i)}{B'(\alpha_i)} 
 \frac{F'}{F} (\alpha_i) x^{\alpha_i}
$$
is equal to
\begin{eqnarray*}
m_F x\sum_{i=1}^d   \frac{\beta_i }{1-\alpha_i} 
 & -& \sum_{i=1}^d \beta_i \psi_0(x,F, \alpha_i)
+ \sum_{i=1}^d \beta_i \left\{\sum_{j=1}^r x^{-\mu_j/\lambda_j}
\lambda_j f_{\mu_j + \alpha_i\lambda_j}(x^{-1/\lambda_j}) \right\}  \\
 &&
 \phantom{mm}
 + ~~~
 \sum_{i=1}^d \beta_i\left\{ \sum_{j=1}^r 
 \frac{x^{-(\mu_j/\lambda_j)}}{\frac{\mu_j}{
\lambda_j} + \alpha_i}
- \frac{m_F }{\alpha_i}\right\}. 
\end{eqnarray*}

We will say two functions $F,G\in \S$ are of the same {\it Hodge type}
if they admit a functional equation
with the same  $\lambda_j$ and the $\mu_j$.  
In such a situation, we can prove the following.

\begin{thm}\label{hodge}
Consider the set of elements $F\in \S$ with a fixed Hodge type
such that 
$$
\Delta_F := \sum_{\rho} \frac{A(\rho)}{B(\rho)}x^\rho
 + \sum_{i=1}^d \frac{A(\alpha_i)}{B'(\alpha_i)}
 \frac{F'}{F}(\alpha_i)x^{\alpha_i}
$$
is algebraic for algebraic $x >1$. Further assume 
that $\beta_i=\frac{A(\alpha_i)}{B'(\alpha_i)}$
are real with same sign for $1\le i \le d$.  
If there are two elements $F,G$ in this set, then for any 
prime $p$ with $\sqrt{x} < p \le x$, we have
$$
b_F(p) =  b_G(p). 
$$
In particular, if $2\leq x < 3$, then $b_F(2)=b_G(2)$.
\end{thm}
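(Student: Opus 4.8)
The plan is to read everything off the explicit-formula expansion of $\sum_{\rho}\frac{A(\rho)}{B(\rho)}x^\rho+\sum_i\frac{A(\alpha_i)}{B'(\alpha_i)}\frac{F'}{F}(\alpha_i)x^{\alpha_i}$ displayed just before the statement, and to isolate the part depending on the individual $F$ from the part determined by the Hodge type alone. Writing $\beta_i=A(\alpha_i)/B'(\alpha_i)$ and $K:=x\sum_{i=1}^d\frac{\beta_i}{1-\alpha_i}-\sum_{i=1}^d\frac{\beta_i}{\alpha_i}$, which is algebraic since $x,\alpha_i,\beta_i\in\overline{\Q}$, that expansion reads $\Delta_F=m_FK-\sum_{i=1}^d\beta_i\,\psi_0(x,F,\alpha_i)+H$, where $H$ collects the terms built from $f_{\mu_j+\alpha_i\lambda_j}(x^{-1/\lambda_j})$ and from $\frac{x^{-\mu_j/\lambda_j}}{\mu_j/\lambda_j+\alpha_i}$. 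Since $F$ and $G$ share the same $\lambda_j$ and $\mu_j$, the term $H$ is identical for both and cancels in the difference. First I would therefore record
$$\Delta_F-\Delta_G=(m_F-m_G)K-\sum_{i=1}^d\beta_i\big(\psi_0(x,F,\alpha_i)-\psi_0(x,G,\alpha_i)\big).$$

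Next I would expand the surviving $\psi_0$-difference. Using $\Lambda_F(n)=b_F(n)\log n$ and the definition of $\psi_0(x,F,\alpha_i)$, the double sum rearranges into $\sum_{n\le x}\big(b_F(n)-b_G(n)\big)\gamma_n\log n$, where $\gamma_n:=\sum_{i=1}^d\beta_i(x/n)^{\alpha_i}$ is algebraic and the sum runs only over prime powers $n\le x$ (a harmless factor $1/2$ attaches to the top term when $x$ is itself a prime power). Because $\Delta_F,\Delta_G$ are algebraic by hypothesis and $(m_F-m_G)K$ is algebraic, this whole combination is algebraic; invoking that the coefficients $b_F(n),b_G(n)$ are algebraic, it is a Baker period, so by Baker's theorem (Theorem~\ref{baker}) it must vanish. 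Collecting terms via $\log p^k=k\log p$ turns the vanishing into $\sum_{p\le x}\big(\sum_{k:\,p^k\le x}k\,(b_F(p^k)-b_G(p^k))\gamma_{p^k}\big)\log p=0$, and since the $\log p$ over distinct primes are $\overline{\Q}$-linearly independent (again by Theorem~\ref{baker}), each inner coefficient vanishes.

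The final step uses the range of $p$. For a prime with $\sqrt{x}<p\le x$ one has $p^2>x$, so only $k=1$ occurs and the coefficient condition collapses to $(b_F(p)-b_G(p))\gamma_p=0$. Here the sign hypothesis enters: for $p\le x$ we have $x/p\ge1$, hence each $(x/p)^{\alpha_i}>0$, and as the $\beta_i$ are real of one sign (and not all zero) the sum $\gamma_p=\sum_i\beta_i(x/p)^{\alpha_i}$ cannot vanish. Therefore $b_F(p)=b_G(p)$. Finally, when $2\le x<3$ the only prime in $(\sqrt{x},x]$ is $p=2$ (since $\sqrt{x}<2\le x<3$), which yields $b_F(2)=b_G(2)$.

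The hard part is really the non-vanishing of $\gamma_p$: everything else is the Baker machinery already assembled in the paper, together with the bookkeeping observation that the constraint $p>\sqrt{x}$ removes all interference from higher prime powers $p^2,p^3,\dots\le x$, so that the coefficient of $\log p$ isolates $b_F(p)-b_G(p)$ exactly. The sign condition on the $\beta_i$ is precisely what rules out accidental cancellation in $\gamma_p$, and without it one could only control the weighted combination rather than the individual coefficients. I would also flag that the passage from ``algebraic'' to ``zero Baker period'' relies on the algebraicity of the Dirichlet coefficients $b_F(n)$; this is automatic for the arithmetically interesting members of $\S$ but should be stated as a standing assumption.
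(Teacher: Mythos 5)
Your proposal is correct and takes essentially the same approach as the paper: the paper's proof is a two-line compression of exactly your argument (the common Hodge-type terms cancel, Baker's theorem forces $\sum_{i}\beta_i\psi_0(x,F,\alpha_i)=\sum_{i}\beta_i\psi_0(x,G,\alpha_i)$, and then the $\overline{\Q}$-linear independence of the $\log p$ finishes it, with the range $\sqrt{x}<p\le x$ eliminating higher prime powers and the sign condition on the $\beta_i$ guaranteeing $\gamma_p=\sum_i\beta_i(x/p)^{\alpha_i}\neq 0$). Your closing caveat is also a fair criticism: the paper's terse proof tacitly uses the algebraicity of the coefficients $b_F(n)$ (and of the $x^{\alpha_i}$, via rationality of the $\alpha_i$), which is an axiom of the arithmetic class $\A$ of the following section rather than of $\S$ itself, so stating it as a standing hypothesis, as you do, is the honest formulation.
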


\begin{proof}
For any such elements $F$ and $G$, the above discussion along
with Baker's theorem would necessarily imply that
$$
\sum_{i=1}^d \beta_i \psi_0(x,F, \alpha_i)
= 
\sum_{i=1}^d \beta_i \psi_0(x,G, \alpha_i).
$$
The theorem then follows as logarithms of primes are linearly independent
over $\Q$ and hence over $\overline{\Q}$ by Baker's theorem.
\end{proof}

\section{The arithmetic Selberg class $\A$ }

We now focus our attention on  a subclass $\A$ of $\mathscr{S}$, 
which we call the {\it arithmetic Selberg class}.  
The class $\A$ is defined by the following
axioms:
\begin{enumerate}
\item
Each $F\in {\A}$ is a Dirichlet series 
$$
F(s) = \sum_{n=1}^\infty a_F(n)n^{-s}, 
$$
absolutely convergent for $\Re(s) >1$.
\item  
There exists an integer $m \geq 0$ such that
$(s-1)^m F(s)$ is an entire function of finite order.
As before, $m_F$ is the smallest such $m$.
\item  For each $F \in {\A}$, 
there exist numbers $Q_F$ and $r$, 
and \textit{rational numbers} $\lambda_j >0$ and $\mu_j \geq 0$ such that 
$$
\xi_F(s) := Q_F^s \prod_{j=1}^r \Gamma(\lambda_js + \mu_j) F(s)
$$
satisfies the functional equation
$$
\xi_F(1-s) = w \overline{\xi_F}(1-s), 
$$
with $w$ a complex number of absolute value 1.
Moreover, the conductor $q_F$ is assumed to be a \textit{natural number}.
\item $\log F(s)$ can be written as a Dirichlet series
$$\sum_{n=1}^\infty b_F(n)n^{-s}, $$
with $b_F(n)$ \textit{algebraic} satisfying
$b_F(n)=0$  if $n$ is not a power of $p$, with $p$ prime.
\end{enumerate}

Technically speaking, $\A$ is not a subclass of $\S$ since
the reader will note that the Ramanujan estimate for the coefficients
$a_F(n)$ which appears
in the definition of the Selberg class $\mathscr S$, is not assumed in the 
above definition since it is not essential
for the nature of the theorems we will derive.  
We also do not assume any estimate for $b_F(n)$. 
It is also easy
to see that the algebraicity of $b_F(n)$ implies the
algebraicity of $a_F(n)$.

Most of the zeta functions that arise in number theory
(such as the Artin $L$-functions and zeta functions
attached to algebraic varieties) either belong to $\A$
or are expected to belong to $\A$.  

As earlier, for $F\in \A$ and for $x >1$, 
$$
x^{\alpha}\frac{F'}{F}(\alpha)
~+~  \sum_{\rho} \frac{x^{\rho }}{\rho-\alpha}
$$
is equal to 
$$
=  \frac{m_F x}{1-\alpha} + \sum_{j=1}^r 
\frac{x^{-(\mu_j/\lambda_j)}}{\frac{\mu_j}{\lambda_j} + \alpha}
- \frac{m_F }{\alpha}
-\psi_0(x,F, \alpha)  
+ \sum_{j=1}^r x^{-\mu_j/\lambda_j}\lambda_j 
f_{\mu_j + \alpha\lambda_j}(x^{-1/\lambda_j}) . 
$$

From this formula, we see that for $x$ algebraic,
the right hand side is an extended Baker period
provided $\alpha$, the $\mu_j$'s and the $\lambda_j$'s are all
rational numbers. 
\begin{thm} \label{arith}
Let $F\in \A$.  Let $ A(t), B(t)$ be  polynomials 
as before and $B(t)$ of degree $d$
with simple rational roots $\alpha_1, \cdots, \alpha_d$
not equal to the zeros and poles of $F$. 
For $x > 1$ and algebraic, we have that
$$
g(x):= \sum_{\rho} \frac{A(\rho)}{B(\rho)}x^\rho
 + \sum_{i=1}^d  \beta_i \frac{F'}{F}(\alpha_i) x^{\alpha_i}
$$
where $\beta_{i}:= \frac{A(\alpha_i)}{B'(\alpha_i)}$,
is an extended Baker period. Further suppose that 
$\mu_j =0 ~~~~~\forall j$. Then
\begin{itemize}
\item
 if $m_F\sum_{i=1}^{d}
\frac{\beta_{i}}{1-\alpha_{i}} \neq 0$, then
$g(x)$ has at most one algebraic zero 
in $(1,\infty)$;
\item
if $m_F \sum_{i=1}^{d} 
\frac{\beta_{i}}{1-\alpha_{i}} = 0$, then the
set
$$
\{ g(x) ~|~ x \in (1, \infty)\cap \overline{\Q} \}
$$
has at most one algebraic number.
\end{itemize}
\end{thm}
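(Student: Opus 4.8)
The plan is to reduce $g(x)$ to the basic building blocks supplied by the explicit formula derived above for $F \in \A$, and then to read off its arithmetic nature from a decomposition into an algebraic part and a Baker period.

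First I would decompose by partial fractions. Writing $A(t)/B(t) = \sum_{i=1}^d \beta_i/(t-\alpha_i)$ with $\beta_i = A(\alpha_i)/B'(\alpha_i)$, one gets
$$g(x) = \sum_{i=1}^d \beta_i \left( \sum_\rho \frac{x^\rho}{\rho - \alpha_i} + x^{\alpha_i} \frac{F'}{F}(\alpha_i) \right).$$
Applying the explicit formula for $F \in \A$ to each bracketed term, and using that the $\alpha_i$, $\lambda_j$, $\mu_j$ are rational while $b_F(n)$ is algebraic, each bracket becomes a sum of the algebraic term $m_F x/(1-\alpha_i)$, algebraic constants, the quantity $-\psi_0(x,F,\alpha_i)$, and the contributions of the trivial zeros. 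Since $\psi_0(x,F,\alpha_i) = x^{\alpha_i}\sum_{n \le x} b_F(n)(\log n)/n^{\alpha_i}$ is a finite $\overline{\Q}$-combination of logarithms of rationals, and since Lemma \ref{nest} expresses each $f_{\mu_j + \alpha_i \lambda_j}(x^{-1/\lambda_j})$ as a $\overline{\Q}$-combination of logarithms of algebraic numbers, every bracket is an extended Baker period for algebraic $x$. As the $\beta_i$ are algebraic, this proves that $g(x)$ is an extended Baker period and settles the first assertion.

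For the dichotomy I would impose $\mu_j = 0$ for all $j$, which collapses the factors $x^{-\mu_j/\lambda_j}$ to $1$ and turns the contributions of the trivial zeros into an algebraic constant plus a Baker period carrying no residual fractional powers of $x$. Collecting terms, for algebraic $x>1$ one obtains the clean decomposition
$$g(x) = Cx + c_0 + L(x), \qquad C = m_F \sum_{i=1}^d \frac{\beta_i}{1-\alpha_i},$$
where $c_0 \in \overline{\Q}$ is independent of $x$ and $L(x)$ is a Baker period. The role of the hypothesis $\mu_j = 0$ is precisely to isolate the algebraic part of $g(x)$ as the affine function $Cx + c_0$, with all remaining $x$-dependence confined to $L(x)$.

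Both cases then follow from Baker's theorem (Theorem \ref{baker}), via the fact that a nonzero Baker period is transcendental. If $C \neq 0$ and $g(\alpha) = g(\beta) = 0$ for distinct algebraic $\alpha,\beta \in (1,\infty)$, then subtracting gives $L(\alpha) - L(\beta) = -C(\alpha - \beta)$; the left side is a Baker period and the right side a nonzero algebraic number, which is impossible, so $g$ has at most one algebraic zero. If instead $C = 0$ and $g(\alpha), g(\beta)$ are both algebraic, then $c_0$ cancels and $g(\alpha) - g(\beta) = L(\alpha) - L(\beta)$ is simultaneously algebraic and a Baker period, hence zero, so all algebraic values of $g$ coincide and the set $\{ g(x) : x \in (1,\infty) \cap \overline{\Q}\}$ contains at most one algebraic number. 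I expect the only delicate bookkeeping to be the verification that, under $\mu_j = 0$, nothing beyond $Cx + c_0$ enters the algebraic part — in particular that each $f$-term and each $\psi_0$-term is a genuine Baker period of pure logarithms rather than an extended one — since this is exactly what makes the cancellation in both cases go through.
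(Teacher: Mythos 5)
Your proposal is correct and follows essentially the same route as the paper: the paper's proof of Theorem \ref{arith} is literally ``the preceding discussion and Baker's theorem,'' where that discussion is exactly your argument — partial fractions, the explicit formula for $F \in \A$, Lemma \ref{nest} to convert the trivial-zero contributions into logarithms of algebraic numbers, and the hypothesis $\mu_j = 0$ to reduce the algebraic part to the affine form $Cx + c_0$ with $C = m_F\sum_i \beta_i/(1-\alpha_i)$. Your closing observation — that the $\psi_0$- and $f$-terms are genuine Baker periods (pure $\overline{\Q}$-combinations of logarithms, with no constant term), which is what lets the subtraction argument for the two cases go through — is precisely the bookkeeping the paper leaves implicit.
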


\begin{proof} 
The proof follows from the preceding discussion 
and appealing to Baker's theorem.
\end{proof}

\section{The case $ 0 < x < 1$ for the Selberg Class}

When $0< x< 1$, recall that one has the following expression 
as indicated by Ingham:
$$
{\sum_{n \leq 1/x}}'~~ \frac{\Lambda(n)}{n} 
= 
-\log x - \gamma ~+~ \sum_{\rho}\frac{x^\rho}{\rho}
~+~ \frac{1}{2}\log \frac{1+x}{1-x} -x, 
$$
where $\gamma$ denotes the Euler's constant.
As noted earlier, this is deduced  by considering 
the following integral
$$
\frac{1}{2\pi i}\int_{3-i \infty}^{3+ i \infty} \frac{x^{1-s}}{1-s} 
~\frac{\zeta'}{\zeta}(s) ~ds.
$$  

A similar argument can be applied to an arbitrary 
element $F$ in the Selberg
class.   Let us write
$$
-\frac{F'}{F}(s) = \frac{m_F}{s-1} - \gamma_F + O(s-1). 
$$
Then, for $ x \in (0,1)$ such that $1/x$ is not a prime power, 
we have
$$
\sum_{n\leq 1/x} \frac{\Lambda_F(n)}{n} 
= 
-m_F\log x - \gamma_F 
+ \sum_{\rho}\frac{x^{\rho}}{\rho} 
+ \sum_{j=1}^r \left\{ \lambda_j x^{1+\mu_j/\lambda_j}
f_{\lambda_j + \mu_j} (x^{1/\lambda_j}) 
+ \frac{\lambda_j x^{1+\mu_j/\lambda_j}}{\lambda_j + \mu_j}\right\},
$$
where $\rho$ runs over the non-trivial  zeros of $\overline{F}(s)$.
Finally, when $x \in (0,1)$ and  $\alpha \ne 0$ and also not equal to 
poles and zeros of $F$, we have 
\begin{eqnarray*}
T(x, F, \alpha)  
 &=&
 \sum_{\rho}\frac{x^\rho}{\rho - \alpha} + \frac{m_F}{\alpha}   
 +    \frac{m_Fx}{1- \alpha}-
 x^{\alpha} \frac{{F}'}{F}(1- \alpha)   \\
 &&
  + \sum_{j=1}^r \lambda_j \frac{x^{1+(\mu_j/\lambda_j)}}{\mu_j+
\lambda_j- \alpha \lambda_j} 
+ \sum_{j=1}^r \lambda_j  x^{1+ \mu_j/\lambda_j} 
f_{\mu_j + \lambda_j- \alpha\lambda_j}(x^{1/\lambda_j}), 
\end{eqnarray*}
where $\rho$ runs over the non-trivial  zeros of $\overline{F}(s)$.
Here
$$
 T(x, F, \alpha) := 
\begin{cases}
x^{\alpha} {\sum_{n \leq 1/x}}~~ \frac{\Lambda_F(n)}{n^{1-\alpha}} 
& \text{if $1/x$ is not a prime power;}\\ \\
x^{\alpha} {\sum_{n < 1/x}}~~ \frac{\Lambda_F(n)}{n^{1-\alpha}}  + 
\frac{x}{2}\Lambda_F(1/x) & \text{otherwise.}
\end{cases}
$$
Hence we have 
\begin{eqnarray*}
\sum_{\rho}\frac{x^\rho}{\rho - \alpha} -
 x^{\alpha} \frac{{F}'}{F}(1- \alpha) 
 &=&
 T(x, F, \alpha)    - \frac{m_F}{\alpha}   
 -    \frac{m_Fx}{1- \alpha}
- \sum_{j=1}^r \frac{\lambda_j x^{1+(\mu_j/\lambda_j)}}{\mu_j
+ \lambda_j- \alpha \lambda_j}  \\
&&
-~~~ \sum_{j=1}^r \lambda_j  x^{1+ \mu_j/\lambda_j} f_{\mu_j 
+ \lambda_j- \alpha\lambda_j}(x^{1/\lambda_j}).
\end{eqnarray*}

Recalling that two functions $F,G\in \S$ are of the same {\it Hodge type}
if they admit a functional equation with the same  $\lambda_j$ and 
the $\mu_j$, we have the following theorem.

\begin{thm}  
Let $A(t), B(t)$ be as in \thmref{ext}.
Consider the set of elements $F\in \S$ with a fixed Hodge type
such that 
$$
h(x) := 
\sum_{\rho} \frac{A(\rho)}{B(\rho)}x^\rho
 - \sum_{i=1}^d \frac{A(\alpha_i)}{B'(\alpha_i)} \frac{{F}'}{F}
 (1-\alpha_i) x^{\alpha_i}
$$
is algebraic for algebraic $x <1$. Further assume that 
$\beta_i=\frac{A(\alpha_i)}{B'(\alpha_i)}$
are real with same sign for $1\le i \le d$.  
If there are two elements $F,G$ in this set, then for any 
prime $p$ with $1/\sqrt{x} < p \le 1/x$, we have
$$
b_F(p) =  b_G(p). 
$$
\end{thm}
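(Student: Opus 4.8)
The plan is to follow the strategy of \thmref{hodge} verbatim, now starting from the rearranged explicit formula valid for $0<x<1$ displayed just above the statement. First I would substitute the partial fraction decomposition $A(t)/B(t)=\sum_{i=1}^d \beta_i/(t-\alpha_i)$, with $\beta_i=A(\alpha_i)/B'(\alpha_i)$, into that identity, obtaining
$$
h(x) = \sum_{i=1}^d \beta_i\, T(x,F,\alpha_i) - \sum_{i=1}^d \frac{\beta_i m_F}{\alpha_i} - \sum_{i=1}^d \frac{\beta_i m_F x}{1-\alpha_i} - \sum_{i=1}^d \beta_i \sum_{j=1}^r \frac{\lambda_j x^{1+\mu_j/\lambda_j}}{\mu_j+\lambda_j-\alpha_i\lambda_j} - \sum_{i=1}^d \beta_i \sum_{j=1}^r \lambda_j x^{1+\mu_j/\lambda_j} f_{\mu_j+\lambda_j-\alpha_i\lambda_j}(x^{1/\lambda_j}).
$$
Here the last two sums depend on $F$ only through the data $\lambda_j,\mu_j$, that is, through the Hodge type, while by \lemref{nest} (applied with rational index and algebraic argument $x^{1/\lambda_j}$) each $f_{\mu_j+\lambda_j-\alpha_i\lambda_j}(x^{1/\lambda_j})$ is an extended Baker period, and the remaining explicit terms are algebraic for algebraic $x$.

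Next, suppose $F$ and $G$ share the fixed Hodge type and that the value $h(x)$ attached to each — write it $h_F(x)$ and $h_G(x)$ to record the dependence on the element — is algebraic. I would subtract the two expressions. Because $F$ and $G$ have identical $\lambda_j,\mu_j$ while $\alpha_i,\beta_i,x$ are common, the two $f_u$-sums and the two rational $x^{1+\mu_j/\lambda_j}$-sums cancel exactly, and the $m_F$- and $m_G$-dependent pieces are algebraic. Hence
$$
\sum_{i=1}^d \beta_i\bigl(T(x,F,\alpha_i) - T(x,G,\alpha_i)\bigr)
$$
is algebraic. This quantity is a $\overline{\Q}$-linear combination of logarithms of primes, i.e.\ a Baker period, so by \thmref{baker} it can be algebraic only if it vanishes. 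Thus $\sum_i \beta_i T(x,F,\alpha_i) = \sum_i \beta_i T(x,G,\alpha_i)$.

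Finally I would compare coefficients of $\log p$ on the two sides, using that the $\log p$ are linearly independent over $\overline{\Q}$ (again \thmref{baker}). Writing $\Lambda_F(p^k)=b_F(p^k)\,k\log p$ and recalling $T(x,F,\alpha)=x^{\alpha}\sum_{n\le 1/x}\Lambda_F(n)/n^{1-\alpha}$, the coefficient of $\log p$ is $\sum_{k:\,p^k\le 1/x}\bigl(\sum_i \beta_i x^{\alpha_i}p^{-k(1-\alpha_i)}\bigr)k\,b_F(p^k)$. For a prime $p$ in the range $1/\sqrt{x}<p\le 1/x$ one has $p\le 1/x$ but $p^2>1/x$, so only $k=1$ survives, and the coefficient reduces to $\frac{b_F(p)}{p}\sum_{i=1}^d \beta_i (xp)^{\alpha_i}$. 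Equating with the corresponding coefficient for $G$ yields $\bigl(b_F(p)-b_G(p)\bigr)\sum_i \beta_i (xp)^{\alpha_i}=0$. Since $xp>0$ and the $\beta_i$ are real of a common sign, the factor $\sum_i \beta_i (xp)^{\alpha_i}$ is a sum of nonzero terms all of one sign, hence nonzero, forcing $b_F(p)=b_G(p)$.

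The points needing care — which I expect to be friction rather than a genuine obstacle — are the bookkeeping of the correction term $\frac{x}{2}\Lambda_F(1/x)$ in $T$ when $1/x$ is itself a prime power (this only rescales the relevant coefficient of $\log p$ by $\sum_i\beta_i$, still nonzero by the sign hypothesis), and the tacit use of the algebraicity of the coefficients of $\log p$ so that \thmref{baker} may be invoked to force vanishing. The genuinely decisive input is the same-sign hypothesis on the $\beta_i$: it is precisely what guarantees $\sum_i\beta_i(xp)^{\alpha_i}\neq 0$ and thereby lets one isolate $b_F(p)=b_G(p)$ from the single surviving term.
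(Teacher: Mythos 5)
Your proposal is correct and takes exactly the paper's route: the paper's own proof is the one-line remark that one argues ``along the line of proof of \thmref{hodge}'' using the $0<x<1$ explicit formula, which is precisely what you carry out. You in fact supply the details the paper leaves implicit --- the cancellation of the Hodge-type $f_u$-terms in the difference $h_F(x)-h_G(x)$, the extraction of the $\log p$ coefficient $\frac{b_F(p)}{p}\sum_{i=1}^d\beta_i(xp)^{\alpha_i}$ with only $k=1$ surviving for $1/\sqrt{x}<p\le 1/x$, the prime-power correction term, and the role of the same-sign hypothesis --- all consistent with the intended argument.
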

\begin{proof}
The proof follows arguing along the line of proof 
of \thmref{hodge}.
\end{proof}

Finally, when $F$ is in the Arithmetic Selberg class,
we have the following theorem whose proof 
is analogous to that of \thmref{arith}.
 \begin{thm}\label{ext}
 Let $F\in \A$. Let $ A(t)$ and $B(t)$ be as before and let  
 $\alpha_{1} ..., \alpha_{d}$ be the  roots of $B(t)$
 which are all rational, non-zero, simple and not equal 
 to the zeros and poles of $F$. Then
for an algebraic $x \in (0,1)$, the number
$$
h(x) := 
\sum_{\rho} \frac{A(\rho)}{B(\rho)}x^\rho
 - \sum_{i=1}^d \frac{A(\alpha_i)}{B'(\alpha_i)} \frac{{F}'}{F}
 (1-\alpha_i)
 x^{\alpha_i}
$$
is an extended Baker period. Here $\rho$ 
runs over the non-trivial  zeros of $\overline{F}(s)$.
Further suppose that $\mu_j =0 ~~~~~\forall j$. Then
\begin{itemize}
\item
 if $m_F \sum_{i=1}^{d}
\frac{\beta_{i}}{1-\alpha_{i}} \neq 0$, then
$h(x)$ has at most one algebraic zero 
in~$(0,1)$;
\item
if $m_F \sum_{i=1}^{d} 
\frac{\beta_{i}}{1-\alpha_{i}} = 0$, then the
set
$$
\{ h(x) ~|~ x \in (0,1)\cap \overline{\Q} \}
$$
has at most one algebraic number.
\end{itemize}
\end{thm}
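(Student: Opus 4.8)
The plan is to follow the proof of \thmref{arith}, carrying the $x>1$ argument over to the range $0<x<1$ by means of the identity for $T(x,F,\alpha)$ derived just above. Using the partial fraction decomposition $A(t)/B(t)=\sum_{i=1}^d \beta_i/(t-\alpha_i)$ with $\beta_i=A(\alpha_i)/B'(\alpha_i)$, I would first write
$$
h(x)=\sum_{i=1}^d \beta_i\left(\sum_{\rho}\frac{x^{\rho}}{\rho-\alpha_i}-x^{\alpha_i}\frac{F'}{F}(1-\alpha_i)\right)
$$
and substitute the displayed formula for each bracket. This presents $h(x)$ as a $\overline{\Q}$-linear combination of the quantities $T(x,F,\alpha_i)$, the archimedean terms $m_F/\alpha_i$, $m_Fx/(1-\alpha_i)$ and $\lambda_j x^{1+\mu_j/\lambda_j}/(\mu_j+\lambda_j-\alpha_i\lambda_j)$, and the series $\lambda_j x^{1+\mu_j/\lambda_j}f_{\mu_j+\lambda_j-\alpha_i\lambda_j}(x^{1/\lambda_j})$.

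For the first assertion I would show each of these pieces lies in the space of extended Baker periods. Because $F\in\A$ has algebraic coefficients $b_F(n)$, the finite sum $T(x,F,\alpha_i)=x^{\alpha_i}\sum_{n\le 1/x} b_F(n)(\log n)/n^{1-\alpha_i}$ is a $\overline{\Q}$-combination of logarithms of integers, hence a Baker period; the rationality of $\alpha_i,\mu_j,\lambda_j$ together with the algebraicity of $x$ makes every exponent and prefactor algebraic. Each $f_u$-term, with $u=\mu_j+\lambda_j-\alpha_i\lambda_j\in\Q$ and argument $x^{1/\lambda_j}\in\overline{\Q}$, is a Baker period by \lemref{nest}, and the remaining archimedean terms are algebraic. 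Summing, $h(x)$ is an extended Baker period, which is the first claim.

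For the dichotomy I impose $\mu_j=0$ and isolate the rational (non-logarithmic) part $P(x)$ of $h(x)$. The terms $T(x,F,\alpha_i)$ and, crucially, the series $\lambda_j x\, f_{\lambda_j(1-\alpha_i)}(x^{1/\lambda_j})$ contribute nothing to $P(x)$: by \lemref{nest} the latter is $x$ times a pure combination of logarithms, so it belongs to the Baker-period part $L(x)$ even after the factor $x$. Hence $P(x)=c_0+c_1x$ is affine linear, where $c_1$ is an explicit algebraic number proportional to $\sum_{i=1}^d \beta_i/(1-\alpha_i)$, the constant of proportionality being determined by $m_F$ and the $\lambda_j$; the two cases of the theorem correspond exactly to $c_1\neq 0$ and $c_1=0$, just as for \thmref{arith}. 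Writing $h(x)=c_0+c_1x+L(x)$ with $L(x)\in\L$, I argue as in the earlier theorems. If $c_1\neq0$ and $h$ had two algebraic zeros $x_1\neq x_2$ in $(0,1)$, subtracting gives $c_1(x_1-x_2)=-(L(x_1)-L(x_2))$, exhibiting a nonzero algebraic number as a Baker period, contrary to \thmref{baker}; so $h$ has at most one algebraic zero. If $c_1=0$, then $P(x)\equiv c_0$, and for algebraic $x_1,x_2$ one has $h(x_1)-h(x_2)=L(x_1)-L(x_2)\in\L$; were both $h(x_1),h(x_2)$ algebraic, this difference would be an algebraic Baker period, hence $0$, forcing $h(x_1)=h(x_2)$. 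Thus $\{h(x):x\in(0,1)\cap\overline{\Q}\}$ has at most one algebraic element.

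The main obstacle is the correct bookkeeping of the archimedean ($\Gamma$-factor) contributions, which is precisely where the $x<1$ case differs from \thmref{arith}. One must check that the powers of $x$ multiplying the $f_u$-series do not spawn any algebraic summand — this is exactly what \lemref{nest} guarantees, since $f_u$ is a pure logarithmic combination — whereas the genuinely rational archimedean terms are the ones feeding into $c_1$, so that the coefficient of $x$ in $P(x)$ must be read off including those contributions rather than from $m_Fx/(1-\alpha_i)$ alone. The only remaining point needing care, namely the validity of the explicit formula for $\overline{F}$ and the convergence of the rearranged sum over zeros, is supplied by the discussion preceding the theorem.
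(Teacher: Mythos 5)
Your overall route is exactly the paper's: the paper proves \thmref{ext} in one line, declaring it ``analogous to \thmref{arith}'', i.e.\ partial fractions, substitution of the displayed formula for $T(x,F,\alpha)$, \lemref{nest} to turn the $f_u$-series into logarithms of algebraic numbers, and \thmref{baker}. Your verification of the first assertion (algebraicity of $b_F(n)$ makes $T(x,F,\alpha_i)$ a $\overline{\Q}$-combination of logarithms of primes; rationality of $\alpha_i,\mu_j,\lambda_j$ and algebraicity of $x$ make all prefactors and exponents algebraic) is correct and is what the paper intends. Your first bullet also goes through: if $m_F\sum_i\beta_i/(1-\alpha_i)\neq 0$ then $m_F\geq 1$, so the linear coefficient is nonzero and two algebraic zeros would exhibit a nonzero algebraic Baker period.

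The genuine gap is in your claim that ``the two cases of the theorem correspond exactly to $c_1\neq 0$ and $c_1=0$, just as for \thmref{arith}'' --- and it sits precisely at the bookkeeping point you yourself flag. With $\mu_j=0$ for all $j$, the archimedean terms $\sum_{j=1}^r \lambda_j x^{1+\mu_j/\lambda_j}/(\mu_j+\lambda_j-\alpha_i\lambda_j)$ collapse to $rx/(1-\alpha_i)$, so the true linear coefficient is $c_1=-(m_F+r)\sum_{i=1}^d\beta_i/(1-\alpha_i)$: the $\lambda_j$ cancel and the \emph{number} $r$ of $\Gamma$-factors enters. Consequently, when $m_F=0$, $r\geq 1$ and $\sum_i\beta_i/(1-\alpha_i)\neq 0$ (e.g.\ $F$ entire), the hypothesis of the second bullet, $m_F\sum_i\beta_i/(1-\alpha_i)=0$, holds while $c_1\neq 0$; your Baker argument then yields only the first-bullet conclusion (at most one algebraic zero), not ``at most one algebraic value'': if $h(x_1),h(x_2)$ are both algebraic one merely concludes $L(x_1)=L(x_2)$ and $h(x_1)-h(x_2)=c_1(x_1-x_2)$, with no contradiction. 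To be fair, this mismatch is inherited from the paper's own statement: the dichotomy appears transcribed from \thmref{arith}, where for $x>1$ and $\mu_j=0$ the corresponding $\Gamma$-terms are $\sum_j x^{-\mu_j/\lambda_j}/(\mu_j/\lambda_j+\alpha_i)=r/\alpha_i$, constant in $x$, so that the coefficient of $x$ really is $m_F\sum_i\beta_i/(1-\alpha_i)$ there. For $0<x<1$ the natural dichotomy delivered by this method is $(m_F+r)\sum_i\beta_i/(1-\alpha_i)\neq 0$ versus $=0$; you should either state and prove the theorem in that form, or supply a separate argument for the sub-case $m_F=0$ with $r\sum_i\beta_i/(1-\alpha_i)\neq 0$, which neither your write-up nor the paper's one-line proof covers.
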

\section{Concluding remarks}
It is yet unclear what role (if any) transcendental number theory
plays in our journey towards the grand Riemann hypothesis.  The generalized
Li criterion as well as many of the theorems of this paper suggest that
there may be a link.  If so, this paper represents a humble
beginning towards our lofty goal.

\end{document}